\def\cyr{%
\renewcommand\rmdefault{wncyr}%
\renewcommand\sfdefault{wncyss}%
\renewcommand\encodingdefault{OT2}%
\normalfont
\selectfont}
\DeclareMathAlphabet{\zap}{OT1}{pzc}{m}{it}
\DeclareTextFontCommand{\textcyr}{\cyr}
\def\be{\begin{equation}}
\def\ee{\end{equation}}
\def\bea{\begin{eqnarray*}}
\def\eea{\end{eqnarray*}}
\newcommand{\rad}{\text{\cyr   ya}}
\def\CC{\mathbb C}
\newtheorem{main}{Theorem}
\DeclareMathOperator{\Hess}{Hess}
\DeclareMathOperator{\Aut}{Aut}
\DeclareMathOperator{\Iso}{Iso}
\newtheorem{conj}{Conjecture}
\newtheorem{lem}{Lemma}
\newtheorem{prop}{Proposition}
\newenvironment{proof}{\medskip \noindent
{\bf Proof.}}{\hfill \rule{.5em}{1em}
\\}
\newenvironment{rmk}{\mbox{ }\\{\bf  Remark}\mbox{ }}{
\hfill $\Box$\mbox{}\bigskip}
\def\ZZ{{\mathbb Z}}
\def\RR{{\mathbb R}}
\def\CP{{\mathbb C \mathbb P}}
\begin{document}

\title{Bach-Flat K\"ahler Surfaces}

\author{Claude LeBrun\thanks{Supported 
in part by  NSF grant DMS-1510094.}\\Stony
 Brook University}

\date{January 15, 2017}
\maketitle

\hfill  {\em In memoriam Gennadi Henkin.}

 \begin{abstract} A Riemannian metric on a compact $4$-manifold is said to be Bach-flat if it is a critical point for the 
$L^2$-norm of the Weyl curvature. When the Riemannian $4$-manifold in question is a K\"ahler surface,
we  provide a rough classification 
of solutions,  followed by   detailed results regarding each case in the classification. The most mysterious  case
prominently involves $3$-dimensional  CR manifolds. 
 \end{abstract}
 
 \section{Introduction}
 \label{intro}

 On a smooth connected   compact   $4$-manifold $M$, the {\em Weyl functional} 
\begin{equation}
\label{weylfun}
\mathcal{W}(g) := \int_M\| W\|_{g}^2~d\mu_g~,
\end{equation}
 quantifies  the deviation  of  a Riemannian metric
$g$ from  local conformal flatness. 
Here $W$ denotes the {\em Weyl tensor} of $g$, which is the piece of the Riemann curvature of $g$ complementary to the Ricci tensor, while  
 the norm $\|\cdot\|_g$  and the volume form $d\mu_g$ in the integrand
are    those associated with the given metric $g$. 
 The Weyl functional \eqref{weylfun} is invariant   not only under the  action of 
the diffeomorphism group (via pull-backs), but also under the action  of the smooth positive functions ${\zap f}: M\to \RR^+$
 by conformal rescaling $g \rightsquigarrow {\zap f}g$.

It is both natural and  useful to study   metrics that are critical points of Weyl functional. The   Euler-Lagrange equations for  such  a metric can be expressed 
\cite{Bach,bes}  as 
$B=0$,
where the {\em Bach tensor} $B$ is defined by
$$
B_{ab}:=(\nabla^c\nabla^d+\frac{1}{2}r^{cd})
W_{acbd}, 
$$
so  these critical metrics  are  said to be 
{\em Bach-flat}.  For reasons reviewed  in \S \ref{pdq},  every $4$-dimensional 
 {conformally  Einstein} metric  is Bach-flat, as is every  {anti-self-dual} metric.
 Conversely, if  the Bach-flat manifold $(M^4,g)$ also happens to be {\em K\"ahler}, Derdzi\'{n}ski \cite[Prop. 4]{derd} discovered that 
 the  geometry of $g$ must locally be of one of these two types near a generic point. Our purpose here is to 
 sharpen this observation into a global classification of solutions. Our main result is the following:

\begin{main} \label{summary}
Let $(M^4, g, J)$ be a  compact connected  Bach-flat  K\"ahler surface. 
Then $g$ is either anti-self-dual, or else is  conformally Einstein on an open dense subset
of $M$. Moreover, the geometric behavior of $(M^4, g, J)$ fits into  exactly one slot of the following 
classification scheme: 
\begin{enumerate}[{\rm I.}]
\item The scalar curvature satisfies $s > 0$ everywhere. In this case,  there are  just two possibilities:
\begin{enumerate}[{\rm (a)}]
\item $(M,g, J)$ is K\"ahler-Einstein, with $\lambda >  0$; or else 
\item $(M, s^{-2}g)$ is Einstein,  with $\lambda >  0$, but has holonomy $\mathbf{SO}(4)$. 
\end{enumerate} 
\item The scalar curvature satisfies $s \equiv 0$. There are again two possibilities: 
\begin{enumerate}[{\rm (a)}]
\item $(M, g, J)$ is K\"ahler-Einstein, with $\lambda =  0$; or else 
\item  $(M,J)$ is a (possibly blown-up) ruled surface, and $g$ is anti-self-dual, but $M$ is not even homeomorphic to an Einstein manifold. 
\end{enumerate} 
\item The scalar curvature satisfies $s < 0$ somewhere. Then there are again exactly two possibilities: 
\begin{enumerate}[{\rm (a)}]
\item $(M,g, J)$ is K\"ahler-Einstein, with $\lambda < 0$; or else 
\item $(M,J)$ is a (possibly blown-up) ruled surface, and  $s$  vanishes exactly along a smooth connected totally umbilic  hypersurface $\mathcal{Z}^3\subset M^4$. 
Moreover, $M- \mathcal{Z}$ has precisely 
 two connected components,  and on both  of these $h:=s^{-2}g$  is a complete 
Einstein metric with $\lambda < 0$. 
\end{enumerate}
\end{enumerate}
\end{main}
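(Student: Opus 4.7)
The plan is to combine the local dichotomy of Derdzi\'nski quoted in \S\ref{intro} with a careful global analysis organized by the sign of the scalar curvature $s$ of $g$. Derdzi\'nski's theorem tells us that at every point where $s\neq 0$, the conformally rescaled metric $\tilde g := s^{-2}g$ is Einstein. Since a K\"ahler surface satisfies $|W^+|^2 = s^2/24$, the condition $W^+ \equiv 0$ is equivalent to $s \equiv 0$. Combined with real-analyticity of $g$ in harmonic coordinates, this yields the preliminary dichotomy: either $g$ is globally anti-self-dual (and $s\equiv 0$), or $s$ is nonzero on an open dense subset, on which $\tilde g$ is Einstein. This gives the opening clause of Theorem~A.

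I would then branch on the sign of $s$. Case~II ($s\equiv 0$) is the anti-self-dual K\"ahler case: either $g$ is Ricci-flat, giving Case~II.(a), or $g$ is scalar-flat K\"ahler but not Einstein, and LeBrun's classification of scalar-flat K\"ahler surfaces forces $(M,J)$ to be a blown-up ruled surface; the failure of such $M$ to admit any Einstein metric should follow by comparing the Hitchin--Thorpe inequality with the known topology of blown-up ruled surfaces. In Case~I ($s>0$), $\tilde g$ is a globally defined Einstein metric of positive scalar curvature; either $g$ is already K\"ahler-Einstein (Case~I.(a)), or a four-dimensional holonomy argument, together with the fact that $\tilde g$ cannot be K\"ahler with respect to $J$ unless $s$ is constant, forces $\tilde g$ to have holonomy $\mathbf{SO}(4)$, yielding Case~I.(b). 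The sign-definite portion of Case~III ($s<0$ everywhere) is handled similarly, with the additional input that a compact K\"ahler surface carrying a negative-Einstein conformal representative is automatically K\"ahler-Einstein.

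The genuinely new content lies in Case~III.(b), where $s$ changes sign. Here I would first exclude the intermediate possibility ``$s\geq 0$ with equality somewhere but not identically zero'' via a strong maximum principle applied to the second-order PDE satisfied by $s$, obtained by combining the trace-free conformal-Einstein equation $\Hess_0 s = -\tfrac{s}{2}\,\ro$ with Derdzi\'nski's K\"ahler identity relating $\ro$ and $\Hess s$. The same equations force $\nabla s$ to be nowhere vanishing along $\mathcal{Z}:=\{s=0\}$, so $\mathcal{Z}$ is a smooth hypersurface; total umbilicity then follows from the special form of $\Hess s$ along $\mathcal{Z}$ dictated by the equation. I expect connectedness of $\mathcal{Z}$ and the two-component structure of $M-\mathcal{Z}$ to follow from a global argument using $\grad s$ together with the K\"ahler structure. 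The main obstacle will be completeness of $h = s^{-2}g$ on each component, which requires a sharp blow-up rate for $s^{-2}$ along geodesics approaching $\mathcal{Z}$; the identification of $(M,J)$ as a blown-up ruled surface should then follow from the Enriques--Kodaira classification, once one exploits the region $\{s>0\}$ to constrain the Kodaira dimension.
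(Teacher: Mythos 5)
Your overall architecture (Derdzi\'nski's local dichotomy, the conformal Einstein metric $h=s^{-2}g$ on $\{s\neq 0\}$, and a trichotomy by the sign of $s$) matches the paper's, but two of your key steps conceal genuine gaps. The most serious one is your plan to exclude the borderline case ``$s\geq 0$, $s\not\equiv 0$, but $s$ vanishes somewhere'' by a strong maximum principle. No such argument is available: the only scalar identity one extracts from $s\mathring{r}+2\Hess_0(s)=0$ is that $\kappa=-6s\Delta s-12|\nabla s|^2+s^3$ is constant, and with $\kappa=0$ this identity is perfectly consistent, to all orders checked locally, with $s$ having an isolated nondegenerate zero ($s=a\varrho^2+O(\varrho^4)$ with $a>0$); there is no local differential inequality that Hopf's lemma can see, because both $s$ and $\nabla s$ vanish at such a point. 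The paper's actual proof of this step (Proposition~\ref{sfk}) is genuinely global: it shows the zero set is a finite collection of nondegenerate minima, observes that $h=s^{-2}g$ is then a complete, asymptotically flat, Ricci-flat metric on the complement, and invokes the positive mass theorem (or Bishop--Gromov) to conclude $h$ is Euclidean, whence $W_+\equiv 0$ and $s\equiv 0$ --- a contradiction. Without some such global input your trichotomy is not exhaustive.

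The second gap is that your Case III(b) outline lacks the two tools that actually drive it. First, the paper proves (Lemma~\ref{bott}) that $s$ is a Morse--Bott function, because $\xi=J\nabla s$ is a Killing field whose fixed components are totally geodesic complex submanifolds with Hessian nondegenerate in the normal directions; this, together with the constancy of $\kappa$ (which gives $|\nabla s|^2=-\kappa/12>0$ along $\mathcal{Z}$, hence that $0$ is a regular value, and that $s\Delta s>0$ at every critical point, hence $\max s>0$), is what makes $\mathcal{Z}$ a smooth hypersurface and what powers the handle-attachment/Dehn-surgery argument showing $\mathcal{Z}$ is connected and $M-\mathcal{Z}$ has exactly two components. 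Your ``global argument using $\grad s$'' is pointing at exactly this, but without the Morse--Bott nondegeneracy and the index restrictions imposed by $J$-invariance of $\Hess(s)$ it does not get off the ground. Conversely, the step you flag as the main obstacle --- completeness of $h$ --- is essentially free: once $ds\neq 0$ along $\mathcal{Z}$, the metric $h=s^{-2}g$ is conformally compact, hence complete. Finally, in Case I(b) you must rule out $h$ being K\"ahler for \emph{either} orientation; the reversed orientation requires $\tau(M)\leq 0$, which the paper obtains from the ruledness statement of Lemma~\ref{lulu}, another ingredient absent from your sketch.
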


A great deal is already known about most  cases in this classification:

\begin{itemize}
\item 
A complex surface admits \cite{chenlebweb,sunspot,tian} a metric of class {I} iff it has $c_1> 0$. Moreover, this metric is always unique \cite{bandomab,lebuniq} up to 
complex automorphisms and homotheties. 
\begin{itemize}
\item The relevant metric is of type {I(a)} iff the complex automorphism group is reductive. 
\item  Up to isometry and rescaling, there are \cite{lebuniq} exactly two solutions of type {I(b)}. 
\end{itemize}
\item Metrics of class {II} are  generally called  scalar-flat K\"ahler metrics. These are exactly the K\"ahler metrics that are anti-self-dual, in the sense that
the self-dual Weyl tensor $W_+$  vanishes identically. 
\begin{itemize}
\item The metrics of type {II(a)}   are often called Calabi-Yau metrics.  A K\"ahler-type complex surface admits such  metrics  \cite{yauma} iff it has  $c_1=0$ mod torsion. 
 This happens 
  exactly for  the minimal complex surfaces of  Kodaira dimension $0$ for which  $b_1$ is even.   When a complex surface admits such a metric,  there is  then  exactly one 
such metric in every K\"ahler class. 
\item  Any complex surface that  admits a metric of type {II(b)} must be projective-algebraic and have  Kodaira dimension $-\infty$; 
they all violate the Hitchin-Thorpe inequality, and most of them are non-minimal. 
Such solutions exist in 
great abundance \cite{klp,leblown,rolsing}; in particular,   any    complex surface with $b_1$ even and 
Kodaira dimension $-\infty$   has blow-ups  that admit metrics of this type. 
\end{itemize}
\item The two cases  that make up class {III} are wildly different. 
\begin{itemize}
\item A complex surface admits a metric of  type {III(a)} iff \cite{aubin,yau} it has $c_1 < 0$. This happens \cite[Prop. VII.7.1]{bpv} iff the complex surface 
is minimal,  has Kodaira dimension $2$,  and contains no rational curve of  self-intersection $-2$.

\item  By contrast, any complex surface admitting a metric of type {III(b)} must have Kodaira dimension $-\infty$. Infinitely many solutions  of this type are currently known
\cite{hwasim,tofribach}. However,  the known solutions all display  peculiar features that  seem most likely to  just be 
artifacts of  the method of construction. 
\end{itemize}
\end{itemize}

Our exposition begins, in \S \ref{pdq}, with a review of some key background facts that will help set the stage  for our main results. We then develop 
 the basic trichotomy of solutions  in \S \ref{tri}.  The proof of Theorem \ref{summary} is then completed in \S \ref{rapt} by carefully proving various auxiliary 
assertions regarding  specific cases  in our classification scheme. The article then concludes by proving some additional  results
about specific types of solutions, with a particular focus on  interesting open problems.

%
%However,  the known solutions  In particular, the known examples all 
%are minimal, have signature $0$, and admit  an orientation-reversing conformal 
%isometry that interchanges the two Einstein regions; moreover, upon passing to the  universal cover,
%they all become  cohomogeneity-$1$ spaces. 
%
 
\section{The Weyl Functional}
\label{pdq}

If $(M,g)$ is any smooth compact oriented Riemannian $4$-manifold, 
the Thom-Hirzebruch signature theorem $\mathbf{p_1} = 3\tau$ implies
a Gauss-Bonnet-like integral formula 
\begin{equation}
\label{th} 
\tau (M) = \frac{1}{12\pi^2}   \int_{M}\left( |W_+|^{2}_g- |W_+|^{2}_g\right) d\mu_{g}
\end{equation}
for the signature $\tau = b_+ - b_-$ of $M$. In this formula,  $W_\pm = (W\pm \star W)/2$  denotes
the  self-dual (respectively, anti-self-dual) 
part of the Weyl curvature 
$${W^{ab}}_{cd} ={\mathcal{R}^{ab}}_{cd} - 2 {r}^{[a}_{[c}\delta^{b]}_{d]}+\frac{s}{3} \delta^a_{[c}\delta^b_{d]}$$
of the given metric $g$, here expressed in terms of the Riemann curvature tensor $\mathcal{R}$, Ricci tensor $r$, and scalar curvature $s$.
We remind the reader of the fundamental fact that ${W^a}_{bcd}$ is conformally invariant;  this  in particular explains
the conformal invariance of the Weyl functional   \eqref{weylfun}. 
But now notice  that \eqref{th}  allows one to re-express the Weyl functional \eqref{weylfun} 
as
\begin{equation}
\label{funky}
{\mathcal W}(g)= 
 -12\pi^2 \tau (M) + 2\int_M | W_+|^2 d\mu ~.
\end{equation}

Now, for  any  smooth 1-parameter family of metrics 
$$g_t:= g +t\dot{g}+ O(t^2)$$ 
the first variation of the Weyl functional is   given by 
	$$\left. \frac{d}{dt}{\mathcal W}(g_{t})\right|_{t=0} =
	-\int {\dot{g}}^{ab} B_{ab} ~d\mu $$
where \cite{Bach,bes} the {\em Bach tensor} $B$ is given by
\begin{equation}
\label{bach1}
B_{ab}=(\nabla^c\nabla^d+\frac{1}{2}r^{cd})
W_{acbd}
\end{equation}
Notice that the contracted Bianchi identity
$$\nabla^aW_{abcd} = \nabla_{[c} r_{d]b} + {\textstyle \frac{1}{6}} g_{b[c} \nabla_{d]}s$$
 implies that any Einstein metric 
satisfies the {\em Bach-flat} condition $B=0$. However, since 
 the conformal invariance of the Weyl functional  also makes it
clear  that  the  Bach-flat condition is 
conformally invariant, it follows that any conformally Einstein $4$-dimensional metric 
 is automatically Bach-flat.

 On the other hand,  any oriented Riemannian 4-manifold satisfies the remarkable identity
$$(\nabla^a\nabla^b + \frac{1}{2}r^{ab}){(\star W)}_{cabd}  =0, $$
which encodes the fact that the first variation of \eqref{th} is zero. 
This allows one to  rewrite \eqref{bach1} as 
 \begin{equation}
\label{bach2}
 B_{ab}= (2\nabla^c\nabla^d+r^{cd})(W_+)_{acbd} . 
\end{equation}
In particular, any metric with $W_+\equiv 0$ is automatically Bach-flat. Indeed, 
equation \eqref{funky} shows that such {\em anti-self-dual} metrics are  actually
minimizers of the Weyl functional, and so must  satisfy the associated Euler-Lagrange equation 
$B=0$. 
  
The Bach tensor is automatically symmetric, trace-free, and divergence-free. This reflects the
fact  that  $-B$ is the  gradient of $\int|W|^2d\mu$,
which  is invariant under diffeomorphisms and rescalings. 
Since  $B$ must therefore be $L^2$-orthogonal to any tensor
field of the form $ug_{ab}$ or $\nabla_{(a} v_{b)}$, we  have
\begin{equation}
\label{harmonica}
B_{ab}= B_{ba}, ~~ {B_a}^a=0, ~~\nabla^aB_{ab}=0
\end{equation}
for any $4$-dimensional Riemannian metric.  

We now narrow our discussion to the case of K\"ahler metrics. 
For any K\"ahler metric $g$ on a complex surface $(M,J)$,   with the orientation 
induced by $J$, the self-dual Weyl tensor
is given by 
\begin{equation}
\label{kahlerweyl}
{(W_+)_{ab}}^{cd}= \frac{s}{12}\left[
\omega_{ab}\omega^{cd} - \delta_{a}^{[c} \delta_{b}^{d]}+ {J_a}^{[c}{J_b}^{d]}
\right]
\end{equation}
and so 
is completely determined by the 
scalar curvature and the K\"ahler form $\omega = g(J \cdot , \cdot)$. 
In particular,  we therefore have 
\begin{equation}
\label{sebastian}
|W_+|^2 = \frac{s^2}{24}, 
\end{equation}
a key fact whose lack of conformal invariance ceases to seem  paradoxical as soon as one   recalls 
that the K\"ahler condition isn't conformally invariant either. 
In conjunction, equations \eqref{funky} and \eqref{sebastian} now tell us that any Bach-flat K\"ahler metric is a critical point of the Calabi functional 
\begin{equation}
\label{calfun}
{\mathcal C} (\omega ) =  \int s^2 d\mu ~,
\end{equation}
considered either as a functional on a fixed K\"ahler class $\Omega =[\omega ]$, or
on the entire space of K\"ahler metrics, with $\Omega$ allowed to vary. 
In particular, a conformally  Einstein, K\"ahler metric $g$ must
be an extremal K\"ahler metric in the sense of Calabi \cite{calabix}.
One of  several equivalent characterizations of an extremal metric is the requirement   that 
 $\xi :=J\nabla s$  be a Killing field of $g$.

%%, and its K\"ahler class must be 
%a critical point of ${\mathcal A}: {\zap K}\to \RR$. 

Plugging \eqref{kahlerweyl}  into \eqref{bach2}, we now obtain a concrete  formula 
$$B_{ab}= \frac{s}{6}\mathring{r}_{ab} + \frac{1}{4}{J_a}^c{J_b}^d\nabla_c\nabla_d s
+\frac{1}{12} \nabla_a\nabla_b s + \frac{1}{12}g_{ab}\Delta s$$
for the Bach tensor of any K\"ahler metric, where 
$\mathring{r}$ denotes  the trace-free part 
$$\mathring{r}_{ab} = r_{ab} - \frac{s}{4}g_{ab}$$
of the Ricci curvature. 
Setting $J^*(B) = B(J\cdot , J \cdot )$, we next decompose the Bach tensor 
$$B=B^\boxplus +B^\boxminus$$
into its $J$-invariant and $J$-anti-invariant parts, and observe that 
\begin{eqnarray}
\label{johann} 
B^\boxplus &:=& \frac{1}{2}\left[ B + J^*(B)\right]=
\frac{1}{6} \Big[ s \mathring{r} + 2 \Hess^\boxplus_0(s)\Big] \\
B^\boxminus&:=& \frac{1}{2}\left[ B - J^*(B)\right]=\frac{1}{12} \Big[  \Hess (s) -
J^*\Hess (s) \Big] ~.\nonumber 
\end{eqnarray}
Here $\Hess = \nabla\nabla$ denotes  the Hessian of a function, and 
 $\Hess^\boxplus_0$ is its trace-free, $J$-invariant  part. 
Now notice that,  since $s$ is real-valued, $\Hess (s) =
J^*\Hess (s)$ if and only if  
$$\nabla_{\bar{\mu}}\nabla^{\nu} s = g^{{\nu}\bar{\lambda}}\nabla_{\bar{\mu}}\nabla_{\bar{\lambda}}s= 0,$$
and this is exactly Calabi's equation $\overline{\partial}\nabla^{1,0}s =0$ for an extremal K\"ahler metric. Consequently, 
a K\"ahler metric $g$ is extremal iff its Bach tensor $B$ is $J$-invariant. 
When this happens,  conditions  \eqref{harmonica} then tells us that $\psi = B(J \cdot , \cdot )$ is a harmonic 
anti-self-dual $2$-form, and  the symmetric  tensors
$g+ tB$ are therefore   $J$-compatible 
K\"ahler metrics for all small $t$. Since $\dot{g}=B$ for this variation of the metric, and since $-B$ is the gradient of $\mathcal{W}$, 
it therefore follows  \cite{chenlebweb}    that   any critical point of the Calabi functional $\mathcal{C}$ on the space of {all} 
$J$-compatible K\"ahler metrics must   actually be Bach-flat.

Revisiting   \eqref{johann}  now reveals  that a  K\"ahler metric is Bach-flat iff 
it is extremal  and satisfies
\begin{equation}
\label{rico}
0= s\mathring{r} + 2  \Hess_0(s).
\end{equation}
However, 
 the trace-free Ricci tensor $\mathring{r}$ always transforms  \cite{bes} under conformal changes $g\rightsquigarrow u^2g$ 
by 
\begin{equation}
\label{riforma}
\mathring{r}\rightsquigarrow 
\hat{\mathring{r}}= \mathring{r} +2 u \Hess_0 (u^{-1})~.
\end{equation}
Thus, as was first pointed out by Derdzi\'{n}ski \cite{derd}, 
the peculiar  conformal rescaling $h=s^{-2}g$ of a 
Bach-flat K\"ahler metric satisfies $\mathring{r}=0$, and so is locally Einstein, 
on the (possibly empty) open set where  $s\neq 0$. We  will  now begin to systematically explore the  global ramifications of this observation.

\section{The Basic Trichotomy} 
\label{tri}

In this section, we will study the global behavior of the scalar curvature $s$ on a   compact Bach-flat  K\"ahler surface. 
Our approach hinges on a  general  property of   strictly extremal K\"ahler manifolds:

\begin{lem} 
\label{bott} 
Let  $(M^4, g,J)$  be a  compact connected extremal K\"ahler surface whose scalar curvature $s$ is non-constant.  Then 
 $s : M \to \RR$ is  a generalized Morse function in the sense of Bott \cite{bottcrit}. In other words, 
the  locus where  $\nabla s =0$ is a disjoint union $\bigsqcup C_j \subset M$ of compact submanifolds, and the Hessian $\Hess(s):= \nabla \nabla s$ 
is non-degenerate  on the normal bundle  $(TC_j)^\perp$ of each  $C_j$. Moreover,  each  submanifold $C_j$ is  either a single point or 
 a smooth compact connected complex curve.  
\end{lem}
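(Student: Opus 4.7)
The plan is to leverage the defining feature of an extremal K\"ahler metric---that $\xi := J\nabla s$ is a (holomorphic) Killing field---to convert Lemma~\ref{bott} into a statement about fixed points of a torus action. Since $\Iso(M,g)$ is compact, the closure $T$ of the $1$-parameter subgroup generated by $\xi$ inside $\Iso(M,g)\cap \Aut(M,J)$ is a compact torus acting by holomorphic isometries. Because $\xi$ generates a dense subgroup of $T$, the zero set $Z(\xi)$ coincides with the fixed-point set $M^T$; and since $J$ is invertible this is in turn exactly the critical set of $s$.

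Next I would invoke the classical result that the fixed-point set of a compact group acting by isometries is a disjoint union of smooth closed totally geodesic submanifolds of even codimension, each component being $J$-invariant---hence a complex submanifold---because $T$ acts holomorphically. In real dimension four a complex submanifold has real dimension $0$, $2$, or $4$; the last possibility is ruled out because a four-dimensional component would force $\xi$ to vanish on an open set, and hence on all of $M$ by the unique continuation property of Killing fields (they are determined by their $1$-jet at any point), contradicting the hypothesis that $s$ is non-constant. Thus each connected component $C_j$ is either an isolated point or a smooth compact connected complex curve.

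Finally, to establish that $\Hess(s)$ is non-degenerate on the normal bundle of each $C_j$, I would exploit the identity $\nabla \xi = J \circ \Hess(s)$, which is immediate from $\nabla J = 0$. At a point $p \in C_j$ the linearization $A := \nabla \xi|_p$ is a skew-symmetric endomorphism of $T_pM$, and Kobayashi's theorem on the zero sets of Killing fields identifies $\ker A$ with $T_p C_j$. Because $J$ is invertible, we then conclude $\ker \Hess(s)|_p = \ker A = T_p C_j$, so $\Hess(s)|_p$ descends to a non-degenerate symmetric bilinear form on the normal space $N_p = (T_p C_j)^\perp$. The one delicate point is Kobayashi's identification $T_p C_j = \ker \nabla \xi|_p$, but it applies directly since each $C_j$ is already a connected component of the zero set of the single Killing field $\xi$---so the higher-rank torus $T$ plays no role in this last step.
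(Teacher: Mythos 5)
Your proof is correct, and while it shares its skeleton with the paper's argument --- both identify $\crit(s)$ with the zero set of the Killing field $\xi=J\nabla s$ and invoke Kobayashi's fixed-point theorem to get closed totally geodesic components --- the two remaining claims are handled by genuinely different means. For non-degeneracy, the paper argues directly with Jacobi fields: along a geodesic $\gamma$ leaving $C_j$ normally, $\xi\circ\gamma$ is a Jacobi field vanishing at $t=0$ but not identically, so $\nabla_{\gamma'(0)}\xi\neq 0$; you instead quote the sharper linearization statement $T_pC_j=\ker(\nabla\xi)|_p$ built into Kobayashi's proof and combine it with $\nabla\xi=J\circ\Hess(s)$. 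For the complex-submanifold claim, the paper uses the $J$-invariance of $\Hess(s)$ (a restatement of Calabi's equation $\overline{\partial}\nabla^{1,0}s=0$) to see that $TC_j=\ker\Hess(s)$ is $J$-invariant; you instead use that the torus $T$ acts holomorphically, so the fixed subspace of the linearized isotropy action is $J$-invariant. The paper's route is more self-contained (it needs from \cite{kobfixed} only that components are totally geodesic submanifolds), whereas yours leans on the full strength of the linearization at fixed points but yields the identification $T_pC_j=\ker\Hess(s)|_p$ with no extra work. Two small points you should make explicit: (i) the fact that $\xi$ is real-holomorphic, i.e.\ $\mathcal{L}_\xi J=0$, is exactly where the extremal equation enters your argument (it is what puts the closure $T$ inside $\Aut(M,J)$ and not merely inside $\Iso(M,g)$), so it deserves a sentence rather than a parenthesis; and (ii) a four-dimensional component is most cleanly excluded by noting it would be open and closed in the connected $M$, forcing $\xi\equiv 0$ --- equivalent to your unique-continuation remark, but shorter.
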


\begin{proof} Since $s$ is non-constant, $(M,g,J)$ is a {\em strictly} extremal K\"ahler manifold, and $\xi= J\nabla s$ is a {\em non-trivial} Killing field. The critical points
of $s$ are exactly the fixed points of the flow $\{ \Phi_t : M\to M~|~ t\in \RR\}$ generated by $\xi$, and since the diffeomorphisms $\Phi_t$
are all isometries of $(M,g)$, every connected component $C_j$ of the fixed point set is \cite{kobfixed} a totally geodesic submanifold. 

At  $p\in C_j$, let $v\in (T_pC_j)^\perp$ be a unit vector normal to $C_j$, and let 
$\gamma : \RR \to M$ be the unit speed geodesic through $p= \gamma (0)\in C_j$ with initial tangent vector $v$. Since $\xi=0$ only at 
$\bigsqcup C_k$,  we then have 
$\xi|_{\gamma (t)}\neq 0$ for all sufficiently 
small $t> 0$. However, since $\xi$ is Killing, $\xi\circ \gamma$ is a Jacobi field along $\gamma$. Because 
$\xi|_{\gamma (0)}=0$, we must therefore have $\nabla_{\gamma^\prime(0)}\xi\neq 0$, as 
Jacobi's equation is a linear second order ODE, and $\xi$ would therefore vanish identically along 
$\gamma$ if the initial value $(\xi|_p , (\nabla_v\xi)|_p)$ of this solution vanished. 
This shows that  that  $v$ cannot belong to 
  the kernel of $v\mapsto (\Hess  s)(v, \cdot) = \omega ( \cdot , \nabla_v\xi)$;  and since $v$ is an arbitrary unit normal vector, it follows that the restriction of the Hessian 
$\nabla \nabla s$ to the normal bundle $(TC_j)^\perp$  must be non-degenerate. This shows  that  $s: M\to \RR$ is  a  Morse-Bott function, as claimed. 

In particular, the tangent space $TC_j$ of any component of the critical locus must  exactly coincide with the kernel of the Hessian $\nabla\nabla s$
at any point. But since $\nabla^{1,0}s$ is a holomorphic vector field, this Hessian must be $J$-invariant. Hence $TC_j= \ker \Hess (s)$ is also $J$-invariant, and $C_j\subset M$
is therefore a complex submanifold. Since $s$ is non-constant by assumption, and since $M$ is assumed to have complex dimension $2$, the components $C_j$
can only have complex dimension $0$ or $1$. Each component $C_j$ of the critical locus is therefore either a single point or a  totally geodesic compact complex curve. 
\end{proof}

This immediately tells us something useful about  the zero set
$$\mathcal{Z}:=  \{ p\in M~|~ s(p) = 0\}$$ 
of the scalar curvature. 

\begin{lem} \label{lemon} Let $(M,g,J)$ be a compact extremal K\"ahler manifold. 
 If $s\not\equiv 0$, then the open subset $M-\mathcal{Z}$ is dense in $M$. 
\end{lem}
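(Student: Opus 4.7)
The plan is to reduce to the non-constant case and then invoke Lemma \ref{bott} to rule out that $\mathcal{Z}$ has non-empty interior, on the grounds that the critical locus of a Morse-Bott function on $M^4$ sits in too low a dimension to contain an open set.

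First I would dispose of the trivial case in which $s$ is constant: the hypothesis $s \not\equiv 0$ then forces $s \equiv c$ for some $c \neq 0$, whence $\mathcal{Z} = \emptyset$ and density is immediate. Thus, without loss of generality, $s$ may be assumed non-constant, and Lemma \ref{bott} applies: the critical set $\{\nabla s = 0\}$ decomposes as a disjoint union $\bigsqcup_j C_j$ of finitely many compact submanifolds, each of which is either an isolated point or a smooth compact complex curve. In particular, each $C_j$ has real dimension at most $2$.

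Next, suppose for contradiction that $M - \mathcal{Z}$ is not dense, so that $\mathcal{Z}$ contains a non-empty open subset $U \subset M$. Since $s$ is smooth and vanishes identically on $U$, so does its gradient, which means $U \subset \bigsqcup_j C_j$. But $U$ is open in the $4$-manifold $M$, hence has real dimension $4$, while the right-hand side is a finite union of real submanifolds of dimension $\leq 2$ — a contradiction, since such a union has empty interior in $M$ (by, e.g., the inverse function theorem locally, or Baire category applied to finitely many closed submanifolds of positive codimension).

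There is really no serious obstacle here: the substantive work has already been done in Lemma \ref{bott}, and what remains is essentially a dimension count. The only point that requires a moment's care is the reduction to non-constant $s$, which is why I would flag the constant case at the outset rather than allow it to muddy the Morse-Bott argument.
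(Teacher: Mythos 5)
Your proof is correct, and it rests on the same key input as the paper's (Lemma \ref{bott}), but it extracts a different consequence from it. The paper argues pointwise and directly: from the Morse--Bott property it deduces that $\nabla s$ and $\Hess(s)$ never vanish simultaneously, so at any $p\in\mathcal{Z}$ the function $s$ vanishes to at most second order along a suitable short curve through $p$; Taylor's theorem then exhibits nearby points where $s\neq 0$, showing every point of $\mathcal{Z}$ lies in the closure of $M-\mathcal{Z}$. You instead argue globally by contradiction: a non-empty open set inside $\mathcal{Z}$ would lie in the critical locus, which by Lemma \ref{bott} is a union of submanifolds of real dimension at most $2$ and so has empty interior in $M^4$. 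Both are sound. Your dimension count is arguably more elementary (no Taylor expansion needed), while the paper's argument gives slightly finer local information about the order of vanishing of $s$ along $\mathcal{Z}$, which is in the spirit of how $\mathcal{Z}$ is analyzed later (e.g.\ in Propositions \ref{sfk} and \ref{understand}). The only point you should make explicit is why the union $\bigsqcup_j C_j$ has empty interior: either note that the Morse--Bott nondegeneracy forces the components to be isolated from one another in the critical set, so compactness of $M$ yields finitely many of them, or simply observe (as the paper implicitly does) that a point of a hypothetical open set $U\subset\mathcal{Z}$ would be a critical point at which $\Hess(s)$ also vanishes, directly contradicting the nondegeneracy on the normal bundle; this second phrasing bypasses any counting of components.
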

\begin{proof} If $s$ were a non-zero constant, $\mathcal{Z}$ would be empty, and there would be nothing to prove. We may thus assume from now on that 
$s$ is non-constant. Lemma \ref{bott} then tells us that $\nabla s$ and $\nabla \nabla s$ can never vanish at the same point.
In particular,  if $p$ is any point where $s(p)=0$,
 Taylor's theorem with remainder  allows us to construct a  short embedded curve $\gamma : (-\varepsilon , \varepsilon) \to M$ through $p= \gamma (0)$ 
 on which $s\circ \gamma$ vanishes only at the origin. Hence  every point of $\mathcal{Z}$ belongs
to the closure of $M-\mathcal{Z}$. This shows that $M-\mathcal{Z}$ is dense, as claimed. 
\end{proof}

We now specialize to the case of {\em Bach-flat} K\"ahler surfaces. Since  equation \eqref{johann} tells us  that  these K\"ahler manifolds
 are in particular extremal,  the  
above Lemmata therefore   automatically apply. However, we have already observed  that the equation  $B=0$ can be expressed as 
$$0= \mathring{r} + 2 s^{-1} \Hess_0 s$$
on the open set  $M-\mathcal{Z}$ defined by  $s\neq 0$, and, by equation \eqref{riforma},  this is exactly equivalent to saying that 
the metric $h:= s^{-2}g$ on $M-\mathcal{Z}$  satisfies $\mathring{r}=0$. 
Since the doubly-contracted  Bianchi identity $2\nabla\cdot r =  \nabla s$  implies that a $4$-manifold 
with $\mathring{r}=0$ 
must have locally constant scalar curvature, 
this means that the function $\kappa$ defined by
\begin{equation}
\label{capital}
\kappa =  -6s \Delta s - 12 |\nabla s|^2 + s^3
\end{equation}
is locally constant on $M-\mathcal{Z}$; indeed,  on this open set 
$$\kappa=s^3 ( 6\Delta + s) s^{-1}$$
exactly represents the scalar curvature of the local Einstein metric $h=s^{-2}g$. 
On the other hand, 
since elliptic regularity  implies  \cite{ls2}
 that any extremal K\"ahler metric 
is smooth with respect to the complex atlas, 
our definition \eqref{capital} of  $\kappa$ certainly guarantees that it is a smooth function on all of $M$. 
These facts now allow us to   deduce the following:

\begin{lem} On any compact connected Bach-flat K\"ahler surface $(M,g,J)$, 
the smooth function $\kappa : M\to \RR$ defined by \eqref{capital} is necessarily constant. 
\end{lem}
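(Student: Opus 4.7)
The plan is to combine two facts already highlighted in the preceding discussion: first, that the formula \eqref{capital} defines $\kappa$ as a globally smooth function on all of $M$ (the text invokes elliptic regularity for extremal K\"ahler metrics, so $s$ and hence $\kappa$ are smooth even across the zero set $\mathcal{Z}$); and second, that on the open set $M-\mathcal{Z}$ where $s\neq 0$, the function $\kappa$ coincides with the scalar curvature of the conformally rescaled metric $h = s^{-2}g$, which by the preceding argument is Einstein there.

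If $s\equiv 0$ on $M$, then $\kappa\equiv 0$ and the conclusion is immediate. Otherwise, Lemma \ref{lemon} guarantees that $M-\mathcal{Z}$ is open and dense in $M$. On this dense open set the twice-contracted Bianchi identity $2\nabla\cdot r = \nabla s$ forces any four-dimensional metric with $\mathring{r}=0$ to have locally constant scalar curvature; applied to $h$, this yields $d\kappa = 0$ pointwise throughout $M-\mathcal{Z}$.

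To conclude, I would observe that $d\kappa$ is then a smooth $1$-form on the entirety of $M$ which vanishes on the dense subset $M-\mathcal{Z}$. By continuity, $d\kappa\equiv 0$ on all of $M$, and since $M$ is connected, $\kappa$ must be globally constant.

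The main conceptual hurdle is really just the extension of a local identity across the (possibly non-empty) zero set $\mathcal{Z}$, where the conformal rescaling $h=s^{-2}g$ degenerates. Here it is rendered trivial by the combination of the global smoothness of $\kappa$, built into the definition \eqref{capital}, together with the density of $M-\mathcal{Z}$ supplied by Lemma \ref{lemon}. Had $\kappa$ been known to be smooth only on $M-\mathcal{Z}$, a substantially more delicate analysis of the boundary behavior of $\kappa$ along $\mathcal{Z}$ would have been required, presumably leaning on the Morse-Bott structure of $s$ from Lemma \ref{bott}.
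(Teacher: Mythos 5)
Your proof is correct and follows essentially the same route as the paper: establish $d\kappa=0$ on the dense open set $M-\mathcal{Z}$ via the Bianchi identity applied to the Einstein metric $h=s^{-2}g$, then extend by continuity using the global smoothness of $\kappa$ and conclude by connectedness. No gaps.
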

\begin{proof} If $s\equiv 0$, equation \eqref{capital} immediately tells us that 
  $\kappa\equiv 0$, and we are done. We may therefore assume henceforth that  $s\not\equiv 0$. Now  notice that 
the smooth $1$-form $d\kappa$  vanishes on the  set $M-\mathcal{Z}$,  since on this set $\kappa$ is locally the scalar curvature of the Einstein metric $h=s^{-2}g$, 
and is therefore locally constant. 
But since $(M-\mathcal{Z} ) \subset M$ is dense  by Lemma \ref{lemon}, it therefore follows that $d\kappa \equiv 0$ by continuity. 
Integration on paths thus  shows  that $\kappa$ is constant, as claimed. 
\end{proof}

The sign of $\kappa$ thus provides a basic trichotomy that will form the basis of our classification of these manifolds. However, the
sign of $\kappa$ also has a direct interpretation in terms of the behavior of the scalar curvature of the given K\"ahler metric:

\begin{lem} 
\label{sign}
On any compact connected Bach-flat K\"ahler surface $(M,g,J)$, 
the minimum value $\min s$ of the scalar curvature  of $g$  has exactly the same sign (positive, negative, or zero) as the constant $\kappa$. 
\end{lem}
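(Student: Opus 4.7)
The plan is to leverage the fact, just established, that $\kappa$ is a global constant on $M$: once this is known, the sign of $\kappa$ can be read off from its value at any single convenient point. The obvious choice is a point $p_0 \in M$ where the scalar curvature $s$ attains its minimum, because two of the three terms in
\[
\kappa = -6 s\,\Delta s - 12|\nabla s|^2 + s^3
\]
then behave favorably: $|\nabla s|^2$ vanishes at $p_0$, and the Hessian of $s$ at $p_0$ is positive semi-definite.

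The short computation then goes as follows. Under the sign convention used in the paper, the Laplacian $\Delta$ is the non-negative operator $-g^{ab}\nabla_a\nabla_b$ (which one can actually read off from the requirement that the displayed formula for $B_{ab}$ of a K\"ahler metric have vanishing trace). Positive semi-definiteness of $\Hess(s)$ at $p_0$ therefore translates into $\Delta s(p_0) \le 0$, and with $|\nabla s|^2(p_0) = 0$ the above expression for $\kappa$ collapses to
\[
\kappa = -6\, s(p_0)\,\Delta s(p_0) + s(p_0)^3.
\]
If $\min s > 0$, then $s(p_0) > 0$ and $-6 s(p_0)\Delta s(p_0) \ge 0$, so the two summands are both non-negative and the second is strictly positive, giving $\kappa > 0$. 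If $\min s < 0$, both summands are non-positive and the second is strictly negative, giving $\kappa < 0$. Finally, if $\min s = 0$, then $s(p_0) = 0$ kills both terms and $\kappa = 0$.

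There is no serious obstacle here. The three cases are mutually exclusive and jointly exhaust the sign possibilities of $\min s$, so together with the constancy of $\kappa$ they yield a perfect match between $\operatorname{sign}(\min s)$ and $\operatorname{sign}(\kappa)$. The only real point to watch is the sign convention for $\Delta$; choosing it the other way would force one to compare values at a minimum \emph{and} a maximum, whereas the convention fixed by the paper keeps the argument a clean one-point verification.
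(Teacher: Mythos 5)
Your proof is correct and follows essentially the same route as the paper: evaluate the constant $\kappa$ at a point where $s$ attains its minimum, use $|\nabla s|^2=0$ there together with the sign of $\Delta s$, and read off the sign. The only (harmless) difference is that the paper invokes Lemma~\ref{bott} to get the \emph{strict} inequality $\Delta s<0$ at the minimum when $s$ is non-constant, whereas your case analysis shows the weak inequality $\Delta s\le 0$ from ordinary calculus already suffices, so your version is marginally more economical.
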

\begin{proof}
When the scalar curvature $s$ is constant,  \eqref{capital} says $\kappa = s^3 = (\min s)^3$, so the claim obviously holds. 
Otherwise, $s$ is non-constant, and  Lemma \ref{bott} tells us that $\Hess (s)\neq 0$ at any critical point of $s$. In particular, if
$p\in M$ is a point where $s$ achieves its minimum, $\Delta s :=   - \nabla^a\nabla_a s <  0$ at $p$, since we now know that 
$\Hess s= \nabla\nabla s$ must be positive semi-definite and non-zero at
a minimum.    However, evaluation of  equation \eqref{capital} at $p$ tells us that   $$ \kappa= s(p)~\left[s^2 - 6\Delta s\right](p),$$
since $|\nabla s|^2(p)=0$. Since $[s^2 - 6 \Delta s ](p) > 0$, this shows that 
 $\kappa$ and $s(p) = \min s$ must have the same sign,  and the result therefore follows.  
\end{proof}

Our next result leads to  a complete understanding of   the   $\kappa=0$ case. 

\begin{prop} A  compact connected K\"ahler surface $(M,g,J)$ is Bach-flat  and has $\kappa=0$ if and only if its scalar curvature $s$ vanishes identically. 
\label{sfk}
\end{prop}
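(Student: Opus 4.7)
The reverse implication is essentially automatic: if $s\equiv 0$, then $|W_+|^2 = s^2/24 \equiv 0$ by \eqref{sebastian}, so $g$ is anti-self-dual, hence Bach-flat (as noted in \S\ref{pdq}), while $\kappa \equiv 0$ follows directly from \eqref{capital}. For the forward direction my plan is to argue by contradiction: assume $g$ is Bach-flat K\"ahler with $\kappa = 0$ but $s\not\equiv 0$, and show that $M$ must then be topologically a $4$-manifold supporting no K\"ahler structure.

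First I would extract structural information about the zero set $\mathcal{Z}$. Lemma \ref{sign} immediately gives $\min s = 0$, so $s\geq 0$ on $M$ and $\mathcal{Z}$ is nonempty; since $s$ is then nonconstant, Lemma \ref{bott} is in play. The key move is to evaluate the Bach-flat K\"ahler identity \eqref{rico} at any $p\in \mathcal{Z}$: because $s(p)=0$, this reduces to $\Hess_0(s)|_p = 0$, so $\Hess(s)|_p$ must be a scalar multiple $\lambda\, g|_p$ of the metric. At a minimum one has $\lambda \geq 0$, and Morse-Bott nondegeneracy on the normal bundle rules out $\lambda = 0$ (otherwise the critical component through $p$ would be a connected component of $M$, making $s$ constant). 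Hence $\Hess(s)|_p$ is positive definite, its kernel is trivial, and Lemma \ref{bott} forces the critical component of $s$ through $p$ to be zero-dimensional. Compactness then makes $\mathcal{Z}$ a finite collection of isolated points.

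Next I would pass to the conformally Einstein rescaling $h := s^{-2}g$. By \eqref{riforma} combined with $\kappa = 0$, $h$ is Ricci-flat Einstein on $M - \mathcal{Z}$. Near each $p \in \mathcal{Z}$ the Taylor expansion $s(x) = \tfrac{\lambda}{2}|x|^2_g + O(|x|^3)$ together with the conformal inversion $y = x/|x|^2$ shows that $(M-\mathcal{Z}, h)$ has an asymptotically Euclidean end at $p$ modeled on flat $\RR^4$ with \emph{trivial} asymptotic group, the last being a consequence of $\Hess(s)|_p$ being a \emph{positive} multiple of $g|_p$. Thus $(M-\mathcal{Z}, h)$ is a complete Ricci-flat $4$-manifold with $|\mathcal{Z}|$ asymptotically Euclidean ends of this form.

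The finishing move is Bishop-Gromov volume comparison: for a complete Ricci-nonnegative $n$-manifold the asymptotic volume ratio cannot exceed that of $\RR^n$, so $|\mathcal{Z}|$ AE ends of trivial asymptotic group force $|\mathcal{Z}| \leq 1$, while the rigidity case at $|\mathcal{Z}| = 1$ identifies $(M-\mathcal{Z}, h)$ isometrically with flat $\RR^4$. Hence $M$ is homeomorphic to $S^4$, which admits no K\"ahler metric since $b_2(S^4) = 0$ while a K\"ahler form must have nonzero cohomology class—the desired contradiction. The hardest step, I expect, will be the third: establishing the asymptotic geometry of $h$ near $\mathcal{Z}$ with enough precision for Bishop-Gromov to apply, and confirming that the asymptotic group is trivial rather than a nontrivial finite quotient. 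Both rest on the Bach-flat rigidity $\Hess(s)|_p = \lambda\, g|_p$ with $\lambda > 0$ coming from the second step.
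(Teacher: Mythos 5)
Your proposal is correct, and through its structural core it coincides with the paper's own proof: the reverse direction via \eqref{sebastian}, the use of Lemma \ref{sign} and Lemma \ref{bott} to show every $p\in\mathcal{Z}$ is a critical point, the evaluation of \eqref{rico} at $s=0$ to force $\Hess(s)|_p$ to be a positive multiple of $g$, the resulting finiteness of $\mathcal{Z}$, and the conformal inversion exhibiting $(M-\mathcal{Z},h)$ as a complete asymptotically Euclidean Ricci-flat manifold. Where you diverge is only in the endgame: the paper invokes Bartnik's mass-zero statement and the positive mass theorem to conclude $(M-\mathcal{Z},h)\cong\RR^4$, and then derives a contradiction from $W_+\equiv 0$ via \eqref{sebastian}, whereas you use Bishop--Gromov volume comparison with its rigidity case and then a topological contradiction ($M\cong S^4$ carries no nonzero class in $H^2$, so no K\"ahler form). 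Both of your substitutions are exactly the alternatives the author himself records in the remark immediately following the proof, so your route is fully endorsed by the paper. One small technical point in your favor: the paper uses the isometry $\Phi_{\pi/2a}$ to kill the odd-order terms in the Taylor expansion of $s$, yielding $s=a\varrho^2+O(\varrho^4)$ and hence the decay $h_{jk}=\delta_{jk}+O(\rad^{-2})$ needed for the mass to be well-defined; your cruder expansion $s=\tfrac{\lambda}{2}\varrho^2+O(\varrho^3)$ gives only $O(\rad^{-1})$ decay, which would be insufficient for the positive-mass route but is perfectly adequate for computing the asymptotic volume ratio that Bishop--Gromov requires.
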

\begin{proof} 
If  our K\"ahler surface $(M,g,J)$ 
has $s\equiv 0$, it is anti-self-dual by \eqref{sebastian},
and therefore Bach-flat; the fact that such a manifold has $B=0$ 
is also directly confirmed by  \eqref{johann}. 
Inspection of equation \eqref{capital} now reveals that it also has $\kappa=0$.

Conversely, 
if $(M,g,J)$ is a Bach-flat K\"ahler surface with 
$\kappa =0$, then Lemma \ref{sign}  tells us that  $\min s =0$. Thus,   $\mathcal{Z}= s^{-1}(0)$ is non-empty, and every $p\in \mathcal{Z}$
is a minimum of $s$. 
We will now argue by contradiction, and assume  that 
  $s\not\equiv 0$. This implies    that $s$ is   non-constant, so 
Lemma \ref{bott} now tells us  that $\Hess(s):= \nabla \nabla s$ must be non-zero at any point where $\nabla s=0$. 
But since any point $p\in \mathcal{Z}$ is  a minimum of $s$,  this means  that  $\Hess(s)\neq 0$ at every point of $\mathcal{Z}$. 
However, equation \eqref{rico} tells us  that the trace-free part $\Hess_0(s)$ of the Hessian {\em does} vanish 
along the locus $\mathcal{Z}$ defined by  $s=0$.  Thus, for every point $p\in \mathcal{Z}$, there is
a constant $a=a(p)\neq 0$ such that 
\begin{equation}
\label{ompholos}
\nabla \nabla s = 2a g
\end{equation}
at $p$. Since $p$ is a minimum of $s$, we  must moreover have $a > 0$. Hence every  $p\in\mathcal{Z}$ is a non-degenerate local minimum of $s$,
and it therefore 
 follows that $\mathcal{Z}$ is discrete. Since $M$ is compact,  this then implies that  $\mathcal{Z}$ is  finite. 

Now let $p\in \mathcal{Z}$ be any point where $s$ vanishes, and let $\varrho$ be the Riemannian distance from $p$ in $(M, g)$. Since  \eqref{ompholos}
guarantees that $\nabla \xi = 2a J$ at $p$, 
 the isometry $\Phi_{\pi /2a}$, gotten by flowing along $\xi$ for time $t={\pi}/{2a}$, therefore fixes $p$,  but  reverses the direction of each geodesic through $p$.  
The Taylor expansion of $s$ in geodesic normal coordinates $x^j$ centered at $p$ therefore contains only terms of even order, and \eqref{ompholos} therefore tells us that 
$$s = a  \varrho^2 + O (\varrho^4 ).$$
On the other hand,  we also have
\begin{eqnarray*}
g_{jk} &=& \delta_{jk}+ O(\varrho^2)\\
g_{jk,\ell}&=&O(\varrho)
\end{eqnarray*}
in geodesic normal  coordinates. 
If we now pass to inverted coordinates $\tilde{x}^j= x^j/(a \varrho^2)$ and set $\rad :=\sqrt{\sum_j (\tilde{x}^j)^2}= 1/(a \varrho)$, the metric  $h=s^{-2}g$ thus 
satisfies 
\begin{eqnarray*}
h_{jk} &=& \delta_{jk}+ O(\rad^{-2})\\
h_{jk,\ell}&=&O(\rad^{-3})
\end{eqnarray*}
 so that $(M-\mathcal{Z} , h)$ is {\em asymptotically flat}. However, since $\kappa=0$, the Einstein metric $h$ is actually Ricci-flat.
 This in particular implies   \cite{bartnik} that each end of $(M-\mathcal{Z}, h)$ has mass zero. The positive mass theorem \cite{syaction}
 therefore asserts that 
$(M-\mathcal{Z} , h)$ is isometric to Euclidean  $\RR^4$. In particular, $g$ is conformally flat on $M-\mathcal{Z}$, and so has 
$W_+\equiv 0$ on this open dense set. But  by \eqref{sebastian}, this means that the K\"ahler metric $g$ satisfies $s\equiv 0$ on $M-\mathcal{Z}$.
But $M-\mathcal{Z}$ is by definition precisely the set where $s\neq 0$, so this is a contradiction!  In other words,  $M-\mathcal{Z}$ must actually be empty, 
and  any compact Bach-flat K\"ahler surface with $\kappa =0$ must therefore  have 
$s\equiv 0$, as claimed. 
 \end{proof}

\begin{rmk} The above proof can be recast in a way that avoids using the positive mass theorem. Indeed,  the  volume growth 
of an asymptotically Euclidean Ricci-flat manifold with one end must be exactly Euclidean in the large-radius limit, and would be even larger if there were several ends. 
The Bishop-Gromov inequality \cite{bishop,gromhaus} thus forces the exponential map of any Ricci-flat asymptotically flat manifold to actually be an isometry. This shows 
that the only such manifold is Euclidean space. 

The  final contradiction could also have been rephrased  so as to  emphasize   topology instead of  geometry. 
For example, if $M-\mathcal{Z}$ were diffeomorphic to  $\RR^4$, it  would  only have
one end, so  $\mathcal{Z}$ would necessarily  consist of a single point $p$, and  $M$  would have  to be homeomorphic to $S^4= \RR^4\cup \{ p\}$. 
But this is absurd, because, for example, the K\"ahler class $[\omega]$  of $(M,g,J)$ is a non-zero element of $H^2(M,  \RR)$, while  $H^2(S^4, \RR)=0$. 
Alternatively, one could obtain a contradiction at this same juncture 
 by  emphasizing that $(M,J)$ is by hypothesis a complex surface, whereas  $S^4$ does not  \cite{wusphere} even admit  an almost-complex structure.
\end{rmk}

It now  remains for us  to analyze the two cases $\kappa > 0$ and $\kappa < 0$. The first of these is simpler, and  is quite thoroughly understood. 
 
\begin{prop} 
\label{pop}
If the constant $\kappa$ is positive, the scalar curvature $s$ of the extremal K\"ahler manifold $(M,g,J)$ is  everywhere positive. Consequently,   
$\mathcal{Z}$ is empty, 
and $(M,h)$ is a compact   Einstein $4$-manifold  with positive Einstein constant. 
\end{prop}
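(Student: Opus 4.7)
The plan is to use Lemma \ref{sign} as the main hammer and then stitch together the earlier observations about $h = s^{-2}g$ to get the global conclusion.

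First I would apply Lemma \ref{sign} directly. That lemma asserts that $\min s$ has the same sign as $\kappa$, so the hypothesis $\kappa > 0$ immediately gives $\min s > 0$, which means $s > 0$ at every point of $M$. Consequently the zero locus $\mathcal{Z} = \{p : s(p) = 0\}$ is empty.

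Next I would upgrade the local Einstein metric $h = s^{-2}g$ to a global one. Because $s$ is everywhere positive and smooth (smoothness of $s$ for an extremal K\"ahler metric was noted just before the definition of $\kappa$), the conformal factor $s^{-2}$ is a smooth positive function on the whole of $M$. Hence $h := s^{-2}g$ is a genuine smooth Riemannian metric defined on all of $M$, and it is compact because $M$ is. On the open set $M - \mathcal{Z}$ we already know, from the conformal transformation law \eqref{riforma} combined with \eqref{rico}, that $h$ is Einstein; since $\mathcal{Z} = \emptyset$, this open set is all of $M$, so $h$ is globally Einstein on the compact manifold $M$.

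Finally I would identify the sign of the Einstein constant. We already observed that on $M - \mathcal{Z}$ the function $\kappa$ coincides with the scalar curvature $s_h$ of the Einstein metric $h$; since that open set is now the entire $M$, the (constant) scalar curvature $s_h$ is precisely $\kappa > 0$. An Einstein 4-manifold with positive scalar curvature has positive Einstein constant, so $(M, h)$ is a compact Einstein manifold with positive Einstein constant, as claimed.

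There is essentially no obstacle here: once Lemma \ref{sign} rules out any zero of $s$, every other assertion in the proposition follows by reinterpreting facts already established in the paragraphs preceding the statement. The only thing worth double-checking is that the Einstein constant is positive rather than merely nonnegative, but this is immediate from $s_h = \kappa > 0$.
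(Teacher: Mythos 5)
Your proposal is correct and follows essentially the same route as the paper: apply Lemma \ref{sign} to conclude $s>0$ everywhere, observe that $h=s^{-2}g$ is then a globally defined Einstein metric, and read off the positive Einstein constant from $\kappa$ (the paper records it as $\lambda=\kappa/4$). The extra detail you supply about smoothness of $s^{-2}$ and the identification $s_h=\kappa$ is consistent with what the paper establishes before the proposition.
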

\begin{proof} If $\kappa > 0$,  Lemma \ref{sign} tells us that $\min s > 0$, too. Thus $s> 0$ everywhere on $M$. Hence 
$\mathcal{Z} =s^{-1}(0) = \varnothing$, and  $M- \mathcal{Z} = M$. Thus $h=s^{-2}g$ is a globally defined 
Einstein metric on $M$, with positive Einstein constant $\lambda = \kappa /4$. 
\end{proof}

Previous  results \cite{lebuniq} therefore provide a complete classification of $\kappa > 0$  solutions.  We will say more about this classification in \S \ref{conclusion}  below.

\bigskip 

By contrast,  the  $\kappa < 0$ case is distinctly  more complicated:

\begin{prop}
\label{understand}
If  the constant $\kappa$ is negative, and if $(M,g, J)$ is not K\"ahler-Einstein, then $\mathcal{Z}$ is a smooth connected real hypersurface $\mathcal{Z}^3 \subset M^4$, 
and  the  complement $M-\mathcal{Z}$ of this hypersurface consists of exactly two connected components $M_+$ and $M_-$.  The Einstein manifolds $(M_\pm , h:=s^{-2}g)$ are 
both complete, and have
negative Einstein constant. Moreover, these two manifolds are both Poincar\'e-Einstein, with the same conformal infinity $(\mathcal{Z} , [g|_\mathcal{Z} ])$. 
\end{prop}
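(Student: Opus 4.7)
The plan has four parts, corresponding to the four assertions in the statement.

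First, I would show that $\mathcal{Z}\neq\varnothing$. Suppose otherwise. Since Lemma \ref{sign} gives $\min s<0$, the scalar curvature would then be strictly negative everywhere on $M$, so that $h=s^{-2}g$ would be a globally defined Einstein metric on the compact manifold $M$, with Einstein constant $\lambda=\kappa/4<0$. Since $\xi:=J\nabla s$ annihilates $s$, it is Killing not just for $g$ but for $h$ as well; Bochner's theorem on compact manifolds of negative Ricci curvature then forces $\xi\equiv 0$, so $s$ must be constant. But a Bach-flat K\"ahler metric with constant non-zero scalar curvature is automatically K\"ahler-Einstein via \eqref{rico}, contradicting the hypothesis.

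Next, I would prove that $\nabla s\neq 0$ at every point of $\mathcal{Z}$, so that $\mathcal{Z}$ is a smooth embedded hypersurface. Because $g$ is not K\"ahler-Einstein, $s$ is non-constant, and Lemma \ref{bott} applies. At a hypothetical critical point $p\in\mathcal{Z}$, equation \eqref{rico} evaluated at $p$ gives $\Hess_0(s)|_p=0$, so $\Hess(s)|_p=2a\,g|_p$ for some constant $a$; the Morse-Bott non-degeneracy on the normal bundle to the critical component forces $a\neq 0$, making $p$ an isolated critical zero with $s=a\varrho^2+O(\varrho^4)$ in geodesic normal coordinates. The exact conformal-inversion calculation used in the proof of Proposition \ref{sfk} then yields $h_{jk}=\delta_{jk}+O(\rad^{-2})$ and $h_{jk,\ell}=O(\rad^{-3})$ in suitable coordinates near $p$, so that $(M-\mathcal{Z},h)$ is asymptotically Euclidean near the puncture. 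This is incompatible with $h$ being an Einstein metric of constant scalar curvature $\kappa<0$, since the scalar curvature of an asymptotically flat metric must tend to $0$ at infinity.

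The third step, which I expect to be the main obstacle, is to show that $\mathcal{Z}$ is connected and that $M-\mathcal{Z}$ has exactly two components. My approach is to exploit the Hamiltonian structure: $\xi=J\nabla s$ is a nontrivial Killing field whose K\"ahler Hamiltonian is $s$, and the closure of the one-parameter group it generates is a compact torus $T^k\subset\Iso(M,g)$ acting Hamiltonially on $(M,\omega)$, with $s$ a component of the associated moment map. The Atiyah-Guillemin-Sternberg convexity theorem then implies that the level sets of $s$ are connected, so in particular $\mathcal{Z}=s^{-1}(0)$ is connected; a standard gradient-flow argument upgrades this to show that $\{s>0\}$ and $\{s<0\}$ are also each connected. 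Both are non-empty, because $\min s<0$ gives $M_-\neq\varnothing$, while the absence of critical zeros forbids $s$ from being globally non-positive.

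Finally, the analytic claims on each $M_\pm$ follow essentially by inspection. Since $|s|$ vanishes to first order on $\mathcal{Z}$, it is comparable near $\mathcal{Z}$ to the $g$-distance $t$ from $\mathcal{Z}$; hence $h$-arclength of a curve tending to $\mathcal{Z}$ behaves like $\int\!dt/t$, which diverges, establishing completeness of $(M_\pm,h)$. The same function $\rho=|s|$ serves as a boundary-defining function on the compactification $\overline{M_\pm}=M_\pm\cup\mathcal{Z}$: the rescaling $\rho^2h=g$ extends smoothly across $\mathcal{Z}$ and restricts there to the induced Riemannian metric $g|_{\mathcal{Z}}$, so the conformal infinity of each of $(M_\pm,h)$ is $(\mathcal{Z},[g|_{\mathcal{Z}}])$, as asserted.
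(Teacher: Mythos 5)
Your argument is correct and reaches all four conclusions, but it departs from the paper's proof at two key junctures, so a comparison is worth recording. The paper's central device is a global inequality you never derive: evaluating \eqref{capital} at a minimum of $s$ (where $s<0$ and $\Delta s<0$ by Morse--Bott non-degeneracy) gives $s^3-\kappa=6s\Delta s>0$ there, hence $s^3>\kappa$ and $s\Delta s+2|\nabla s|^2>0$ everywhere; this single inequality simultaneously shows that $\mathcal{Z}$ contains no critical points (so $0$ is a regular value) and that $\max s>0$ (so $\mathcal{Z}\neq\varnothing$). You obtain these two facts separately: non-emptiness of $\mathcal{Z}$ via Bochner's theorem applied to the would-be compact negative Einstein manifold --- which is exactly the alternative argument the paper records in the Remark following the proposition --- and regularity of $0$ by adapting the inversion computation of Proposition \ref{sfk} to contradict $s_h\equiv\kappa<0$ on an asymptotically flat end. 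The latter is a legitimate and rather elegant substitute for the positive mass theorem (which is unavailable here anyway, since $h$ is not Ricci-flat); the one point you should make explicit is that concluding ``scalar curvature tends to zero at infinity'' needs $C^2$ decay of the metric, one derivative beyond the estimates actually displayed in Proposition \ref{sfk}, though the same Taylor-expansion argument does supply the second-derivative decay $O(\tilde{\varrho}^{\,-4})$, so this is presentational rather than substantive. For connectedness of $\mathcal{Z}$ and of $M_\pm$, you invoke the Atiyah--Guillemin--Sternberg connectedness of moment-map level sets, which does apply since $s$ is a Hamiltonian for the torus closure of the flow of $\xi$ and its critical manifolds all have index and coindex different from $1$; the paper instead proves precisely this statement by hand, via handle attachments for sub-level sets and Dehn surgeries for level sets, so your citation outsources the identical Morse--Bott mechanism. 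The completeness and conformal-infinity step coincides with the paper's appeal to conformal compactness. In sum, your route leans more on external theorems (Bochner, asymptotic analysis, AGS) where the paper is self-contained, and the paper's route yields the useful by-product $s^3>\kappa$; both are sound.
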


\begin{proof}
When $\kappa < 0$,   Lemma \ref{sign}  tells us that $\min s < 0$. If $s$ is constant, the constant is thus negative, and $s$ is  nowhere zero. Equation 
 \eqref{rico} thus implies  that the K\"ahler metric $g$ is actually Einstein. 
 
We may thus henceforth assume  that $s$ is non-constant. Lemma \ref{bott} therefore tells us  that 
$s: M \to \RR$ is  a Morse-Bott function. 
In particular, $\Hess s\neq 0$ at any critical point of $s$. 
If $p$ is a point where $s$ attains its minimum, we therefore have $\Delta s = - \nabla^a\nabla_a s< 0$ at $p$. However, since $\kappa < 0$, we also have $s (p ) = \min s < 0$
by Lemma \ref{sign}. Thus $s\Delta s > 0$ at $p$. On the other hand, since  $\nabla s=0$ at any
 minimum, \eqref{capital} tells us that 
$6 s\Delta s = s^3 - \kappa$
at $p$.  Hence  $\min (s^3 -\kappa) = [s^3-\kappa] (p) = [6s\Delta s](p) > 0$, and we therefore have 
 $s^3 - \kappa > 0$ on all of $M$. Equation \eqref{capital} now  tells us that 
 $$s\Delta s + 2|\nabla s|^2 = \frac{s^3- \kappa}{6} > 0$$
everywhere. In particular, we have $s\Delta s > 0$ at every critical point of $s$.

If $q$ is now a point where $s$ attains its maximum, our Morse-Bott argument  predicts that $\Delta s = -\nabla^a\nabla_a s> 0$ at $q$, since $\Hess s$ is negative semi-definite and 
non-zero at $q$. Since we have shown that $s\Delta s > 0$ at every critical point, and so  in particular at  $q$,   it therefore follows that $\max s = s (q) > 0$.

Since $M$ is connected, and since $s$ takes on both positive and negative values, 
the locus $\mathcal{Z}$ defined by $s=0$ must therefore be  non-empty. 
Moreover,  since  $s\Delta s > 0$ at every critical point of $s$, it follows that the locus $\mathcal{Z}$ defined by $s=0$  cannot contain any critical points. 
In other words,  $0$ is a regular value of $s$, and $\mathcal{Z}=s^{-1}(0)$ is a therefore a smooth compact non-empty   real hypersurface
in $M$.

Now since $s: M\to \RR$ is a  Morse-Bott function, 
and since $\Hess s$ is $J$-invariant, any complex-codimension-one component $C_j$ of the critical set 
must be a local maximum or a local minimum of $s$. The  other critical points of $s$ are isolated, and  $\Hess s$ is non-degenerate at these  critical points,
with  index $0$, $2$, or $4$; those of index $0$ are local minima, those of index $4$ are local maxima, and those of
index $2$ are saddle points where the  Hessian is of type $({+}{+}{-}{-})$. 
This  dictates   the manner in which the sub-level sets $M_t := s^{-1} \left( (-\infty , t]\right)$ can change as we increase $t$. 
Indeed, for regular values $t_1 < t_2$,  the sub-level set 
$M_{t_2}$ is obtained  from  from the lower sub-level set $M_{t_1} $ in a manner determined by the critical points  with  $t_1< s < t_2$
and, up to homotopy,  is gotten by adding
\begin{itemize}
\item   a disjoint,  unattached point  for each isolated local minimum; 
\item a disjoint, unattached connected Riemann surface for each non-isolated  local minimum; 
\item a $2$-disk, attached along its $S^1$ boundary, for each saddle point; 
\item a $4$-disk, attached along its $S^3$ boundary, for each isolated local maximum; and 
\item a $2$-disk bundle over a connected Riemann surface, attached along its circle-bundle boundary, for each non-isolated
local maximum. 
\end{itemize}
Since these  operations always 
entail adding a path-connected space along a path-connected boundary,   different path 
components of $M_{t_1}$ always survive as separate path components of $M_{t_2}$. 
It therefore follows that only one 
 of the $C_j$ can be  a  local minimum $s$, 
since two different local minima 
would necessarily end up in different connected components of the connected $4$-manifold $M$.   
In particular,   the set of all local minima of $s$ is actually the set of all {\em global} minima, and must   
 either  be a connected compact
complex curve or just a single point.   Looking at the same picture upside down, 
 we similarly  see that the set of  local maxima of $s$ coincides with the set of global maxima,
 and must  either  be a single point or a  connected compact complex curve. 
 
 The open set $M_-$ defined by $s < 0$ 
 is  the interior of the compact manifold-with-boundary $M_0:= s^{-1}((-\infty , 0])$,
 and is therefore homotopy equivalent to it. On the other hand, since  $M= M_t$ for any 
 $t > \max s$, we see that $M_0$ and $M$ have the same number of connected components.
 Hence $M_-$ must be connected. However, since $-s: M\to \RR$ is also a Morse-Bott function, 
  the same reasoning also applies when we ``turn the picture upside down.'' The set 
 $M_+$ defined by 
 $s > 0$ is  therefore connected, too. 
 Thus $M-\mathcal{Z}= M_-\sqcup M_+$ consists of exactly two connected components, as claimed. 
 
Similar reasoning  allows us to understand how  $\partial M_t$ changes as we vary $t$. In the region $\min s < s < \max s$, 
$s$ is a Morse function in the standard  sense, and only has   critical points  of index $2$. If $t_1$ and $t_2$ are regular values of $s$ with 
$\min s < t_1 < t_2 < \max s$, 
it therefore follows that $\partial M_{t_2}$ is obtained from $\partial M_{t_1}$ by performing surgeries in dimension $2-1=1$.
In other words, every time one passes a critical point, one  just modifies the $3$-manifold $\partial M_{t_1}$   by performing a {\em Dehn surgery}; that is, 
one just removes   a solid torus $S^1 \times D^2$,  and then glues 
it  back  in via  a self-diffeomorphism of its $S^1 \times S^1$ boundary.
 Since $M_t$ is  connected when $t = \min s+ \epsilon$ for sufficiently small
$\epsilon> 0$, and because    Dehn surgery on a connected $3$-manifold always produces another connected $3$-manifold, 
 it follows that $\partial M_t$ is connected for any regular value $t \in (\min s , \max s )$. In particular, since $0$ is a regular value
 of $s: M\to \RR$, 
it follows  that $\mathcal{Z}= \partial M_0$ is connected, as claimed.

Since the  metric $h=s^{-2}g$ on the interior $M_-$ of the compact manifold-with-boundary $M_0$ is obtained by rescaling a smooth metric
$g$ by the inverse-square of a non-negative  function $-s: M_0\to \RR$ which vanishes only at  $\partial M_0= \mathcal{Z}$ and has non-zero 
normal derivative along the boundary, the Einstein manifold 
$(M_-,h)$ is  conformally compact \cite{grahlee,lebhspace,mazco} and hence, in particular,  is complete. By the same reasoning, $(M_+, h)$ is 
also  conformally compact, and therefore  complete. Since, by construction, both of  these manifolds have conformal infinity $(\mathcal{Z},  [g|_\mathcal{Z}])$ and 
Einstein constant $\kappa/4 < 0$, we have thus established all of our claims. 
\end{proof}

\begin{rmk}
A key point in the above result is  that $\max s > 0$ when $\kappa < 0$ and $s$ is non-constant. This can also be proved  in  the following interesting way:

 Since the flow of $\xi=J\nabla s$ preserves both $g$ and 
the scalar curvature $s$ of $g$,  it therefore also preserves the conformally rescaled metric $h=s^{-2}g$  on the open set
$M-\mathcal{Z}$ where $h$ is defined. If  we  had  $\max s < 0$ for a solution with $s$ non-constant,  $\mathcal{Z}$ would be empty, 
and $(M, h)$ would be a compact Einstein manifold with Einstein constant $\kappa /4 < 0$ which supported a   Killing field
$\xi\not\equiv 0$. But any Killing field $\xi$ satisfies the Bochner formula \cite{bochner} 
\begin{equation}
\label{bow}
0 = \frac{1}{2}\Delta |\xi |^2 + |\nabla \xi |^2 - r(\xi , \xi)
\end{equation}
and since $(M,h)$ would have negative Ricci curvature $r=\frac{\kappa}{4}h$, this  immediately leads to a contradiction, as the 
right-hand-side would be strictly positive at a maximum of $|\xi|^2$.  Hence $(M,g)$  must    have $\max s \geq 0$. 
However,  equation \eqref{capital} tells us that $|\nabla s|^2 = - \kappa/12 > 0$ along the locus $\mathcal{Z}$ where $s=0$.
The maximum of $s$  therefore cannot be achieved  at a point where $s=0$, and 
it thus   follows that we must  actually have     $\max s > 0$, as claimed.  
\end{rmk}

\section{The Proof of Theorem \ref{summary}}
\label{rapt}

The trichotomy laid out in the previous section proves most of Theorem \ref{summary}. It remains only to prove the auxiliary claims made  regarding the 
second case of each of our three classes. In doing so, we will make  repeated use of the following observation:

\begin{lem}
\label{lulu}
 If  the compact connected complex surface $(M^4,J)$ admits a strictly extremal K\"ahler metric $g$,
then  $(M,J)$ is ruled, and    $\tau (M) \leq 0$. 
\end{lem}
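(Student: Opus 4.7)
The plan is to first extract geometric input from strict extremality, then apply the Enriques--Kodaira classification, and finally exclude $\CP^2$. Since $g$ is strictly extremal, $s$ is non-constant by definition, so $\xi = J\nabla s$ is a non-trivial Killing field on $(M,g,J)$; by Lemma \ref{bott} its zero set --- the critical locus of $s$ --- is non-empty. The proof therefore reduces to showing that any compact connected K\"ahler surface $(M,J)$ carrying a non-trivial holomorphic vector field with a non-empty zero locus is a (possibly blown-up) ruled surface with $\tau(M) \leq 0$.

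For the ruledness, I would invoke the Enriques--Kodaira classification together with standard facts about holomorphic vector fields on compact complex surfaces: surfaces of general type have no non-trivial holomorphic vector fields, properly elliptic surfaces have only fiber-tangent vector fields (which are never zero), and K\"ahler surfaces of Kodaira dimension zero are, up to finite cover, K3, Enriques, complex tori, or hyperelliptic surfaces, among which only the latter two carry non-trivial holomorphic vector fields, and those are nowhere-vanishing. Equivalently, the Carrell--Lieberman theorem forces $h^{p,q}(M) = 0$ whenever $|p-q|$ exceeds the complex dimension of the zero locus, so in particular $h^{2,0}(M) = 0$, which together with the classification again forces Kodaira dimension $-\infty$. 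Being K\"ahler, $(M,J)$ is thus a (possibly blown-up) ruled surface.

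For the signature bound, the crucial observation is that among (possibly blown-up) ruled surfaces only $\CP^2$ has $\tau > 0$: every minimal ruled surface (including each Hirzebruch surface $F_k$) has $\tau = 0$, while each blow-up decreases $\tau$ by one, and $\tau(\CP^2) = 1$ is the unique positive possibility. It therefore suffices to exclude $(M,J) \cong \CP^2$. But every K\"ahler class on $\CP^2$ is a positive multiple of $c_1$, and the Fubini--Study metric is a K\"ahler--Einstein (hence extremal) representative; by the Bando--Mabuchi uniqueness invoked earlier in the paper, any extremal K\"ahler metric on $\CP^2$ must be Fubini--Study up to scale and holomorphic automorphism, hence of constant scalar curvature. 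This contradicts strict extremality, giving $(M,J) \not\cong \CP^2$ and therefore $\tau(M) \leq 0$. The main obstacle lies in the classification step: the reduction to Kodaira dimension $-\infty$ is not a one-line citation but a synthesis of Enriques--Kodaira with detailed facts about holomorphic vector fields (or the Carrell--Lieberman restrictions), whereas the $\CP^2$ exclusion is comparatively clean given Bando--Mabuchi.
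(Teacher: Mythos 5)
Your reduction of the problem to the statement that a compact K\"ahler surface carrying a non-trivial holomorphic vector field with non-empty zero locus must be rational or ruled is a genuinely different route from the paper's. The paper proves the vanishing of all plurigenera directly and self-containedly: it analyzes the closure of the one-parameter group generated by $\xi$ inside the isotropy representation at a minimum of $s$, and in the two resulting cases (a $2$-torus action, or a periodic $\xi$ generating a $\CC^\times$-action whose generic orbits close up to rational curves) it exhibits enough holomorphic geometry to annihilate every section of $K^{\ell}$, $\ell>0$. Your approach instead cites the classification of holomorphic vector fields on compact surfaces (Carrell--Howard--Kosniowski; equivalently, the fact that on a surface of Kodaira dimension $\geq 0$ any non-trivial holomorphic vector field is nowhere zero). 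That is a correct and shorter path, but be aware that the Carrell--Lieberman vanishing $h^{2,0}=0$ by itself does \emph{not} force Kodaira dimension $-\infty$ (Enriques, bielliptic, and various elliptic and general-type surfaces all have $p_g=0$), so it is the vector-field classification, not the Hodge-theoretic vanishing, that actually carries the argument; and you should note explicitly that the zero set of $\xi$, i.e.\ the critical locus of the non-constant function $s$, is non-empty by compactness.

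The one genuine gap is in your exclusion of $\CP_2$. Bando--Mabuchi uniqueness concerns K\"ahler--Einstein metrics modulo automorphisms and does not assert that every \emph{extremal} K\"ahler metric on $\CP_2$ is Fubini--Study; ruling out a non-CSC extremal metric in a class that also contains a cscK metric requires the much later uniqueness theory for extremal metrics (Chen--Tian, Berman--Berndtsson), which you have not invoked. The paper's argument here is both simpler and standard: a strictly extremal metric has non-zero Futaki invariant, and since the Futaki character annihilates the derived algebra $[\mathfrak{aut}(M),\mathfrak{aut}(M)]$, the group $\Aut_0(M,J)$ cannot be semi-simple; as $\Aut_0(\CP_2)=\mathbf{PGL}(3,\CC)$ is semi-simple, $(M,J)\neq \CP_2$. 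With that substitution your proof closes correctly, since you are right that $\CP_2$ is the only rational or ruled surface with $\tau>0$.
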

\begin{proof} 
By hypothesis,  the scalar curvature $s$ of $g$ is non-constant, and
$\xi = J \nabla s$ is  a non-trivial Killing field. If $p\in M$ is a minimum of the scalar curvature $s$, it is   fixed
by the flow of $\xi$, and 
the  action of $\xi$ is therefore completely determined, via the exponential map of $g$, 
by the induced isometric action on 
$T_pM$ generated   by $\nabla \xi|_p$. 
However, the same observation  allows us to identify the  isotropy subgroup of $p$ in the identity component   $\Iso_0 (M,g)\subset \Aut (M,J)$
of the isometry group 
with a subgroup of $\mathbf{U}(2)$. Since closure of the  $1$-parameter group 
of isometries generated by $\xi$ is a closed connected Abelian subgroup of the istropy group of $p$,
and since $\mathbf{U}(2)$ has rank $2$, this implies  that either $\xi$ is periodic, or that the closure of the  group
it generates is a $2$-torus in $\Iso_0 (M,g)\subset \Aut_0(M,J)$.  

Let us first consider what happens if the isotropy group of $p$ contains a $2$-torus. 
Since the action of this torus on $M$ is modelled, near $p$, on the action of $\mathbf{U}(1) \times \mathbf{U}(1)\subset \mathbf{U}(2)$
on $\CC^2$, the generators give rise to 
two global 
holomorphic vector fields $\Xi_1$ and $\Xi_2$  which both vanish at $p$, but which are linearly independent at generic  nearby points. 
Thus $\Theta =\Xi_1\wedge \Xi_2$ is a holomorphic section  of the anti-canonical bundle $K^{-1}$ 
which vanishes at $p$, but which does not vanish identically. If $\phi$ is a holomorphic section of $K^{\ell}$, 
$\ell > 0$,  then the contraction $\langle  \phi , \Theta^{\otimes \ell} \rangle$ 
is  a global holomorphic function 
on $M$, and so must be  constant. However,  since  $\langle  \phi , \Theta^{\otimes \ell} \rangle$  certainly vanishes at $p$, this constant function consequently    vanishes
identically. Since $\Theta^{\otimes \ell}$  spans  the fiber of $K^{-\ell}$ at a generic point, this shows
that the holomorphic differential $\phi$ vanishes on an open  set, and therefore vanishes identically. Thus, the plurigenera $p_\ell(M) = h^0(M, \mathcal{O} (K^\ell))$, $\ell > 0$,  must  all vanish,
and  the  Kodaira dimension of $(M,J)$ must be $-\infty$. 

On the other hand,  if $\xi=J\nabla s$ is instead periodic,  then $(M,J)$ contains a family of  rational curves. Indeed, if $\xi$ has period $\lambda$, then 
$2\nabla^{1,0}s = \nabla s - i\xi$  generates a holomorphic action of $\CC/\langle i \lambda \rangle$, which we now identify
with the punctured complex plane $\CC^\times$ via $\zeta \mapsto \exp ( 2\pi \zeta/\lambda)$. We will next show that the generic orbit of the resulting 
$\CC^\times$-action on $(M,J)$ can be compactified  by adding two fixed points, corresponding to  $0$ and $\infty$, to produce a rational curve in $M$.  
Indeed,  we already saw in the proof of Proposition \ref{understand}
that the only critical points of $s$ that are not global maxima or minima are  saddle points where $\Hess s$ is of type 
$({+}{+}{-}{-})$. Since  only a $1$-real-parameter family of trajectories of $\nabla s$ ascends to such a saddle point, the 
generic trajectory of $\nabla s$  misses every saddle point, and therefore flows from the minimum value to the maximum value of $s$. 
Now recall that $\min s$ is either attained at a unique point $p_-$, or is attained along a single connected totally geodesic Riemann surface, which we will 
call $\Sigma_-$; similarly, $\max s$ is either attained at a unique point $p_+$, or is attained along a single connected totally geodesic Riemann surface
$\Sigma_+$. The behavior of the flow lines of $\nabla s$ near $\Sigma_\pm$ is particularly simple; asymptotically, they simply approach $\Sigma_\pm$ orthogonally,
at a unique point, since in exponential coordinates $\xi$ just generates rotation in $\CC^2= \{ (z,w)\}$ about the $z$-axis, and since 
$\nabla J=0$, this implies that $\nabla^{1,0}s$  is  therefore given in these coordinates by a constant times $w\frac{\partial}{\partial w} + O(|(z,w)|^3)$. 
On the other hand, near $p_\pm$, the periodicity of $\xi$ analogously allows one to express $\nabla^{1,0}s$ as a constant times
$k z\frac{\partial}{\partial z} + \ell w\frac{\partial}{\partial w} + O(|(z,w)|^3)$ for suitable non-zero integers $k$ and $\ell$. In either case, 
we  not only see  that a  generic $\CC^\times$-orbit may be completed as a holomorphic map $\CP_1\to M$, but also 
that, as we vary the orbit by moving our initial data though a sufficiently small   holomorphic disk $D_\varepsilon$  transverse to a given generic $\CC^\times$-orbit,
the map $D_\varepsilon\times \CC^\times\to M$  induced by the action extends continuously as a map $\Phi:  D_\varepsilon\times\CP_1\to M$; and 
a variant of  the proof of the Riemann removable singularities theorem then shows that this continuous extension $\Phi$ is  necessarily holomorphic.  
In particular, any $\phi\in \Gamma (M, \mathcal{O}(K^{\ell}))$ pulls back as a holomorphic section 
$\Phi^*\phi$ of $\mathcal{O}(K^{\ell})$ on  $D_\varepsilon\times\CP_1$.
But   the restriction of $K^{\ell}_{D_\varepsilon\times\CP_1}$ to any 
$\{ \mbox{point}\} \times \CP_1$
has negative degree $-2\ell$, so it follows that  $\Phi^*\phi\equiv 0$. However, since $\Phi$  is a local biholomorphism  on  $D_\varepsilon\times \CC^\times$, 
it follows that $\phi$ must itself vanish on an open set. Hence $\phi \equiv 0$ 
by uniqueness of analytic continuation. 
Thus, the plurigenera $p_\ell(M) = h^0(M, \mathcal{O} (K^\ell))$, $\ell > 0$,  must  all vanish,
and we thus conclude that   the  Kodaira dimension of $(M,J)$ must be $-\infty$, just as in the previous  case. 

Since $(M,J)$ is of K\"ahler type, 
the Eniques-Kodaira classification \cite{bpv,GH} therefore implies that it is rational or ruled. 
However, since $g$ is a strictly extremal K\"ahler metric on $(M,J)$, the Futaki invariant of $(M,J, [\omega ])$ must also be non-zero \cite{calabix2}.
Since the Futaki invariant $\mathfrak{aut}(M) \to \CC$ kills the derived Lie algebra $[\mathfrak{aut}(M) , \mathfrak{aut}(M)]$,
this  means that  $\Aut_0 (M, J)$  cannot be semi-simple. Consequently,   $(M,J)$ 
cannot be  $\CP_2$. Since $(M,J)$  is rational or ruled, it   is therefore  either a geometrically ruled surface, or a blow-up 
of a geometrically ruled surface. In particular,  $\tau (M) \leq 0$, as claimed. 
\end{proof}

This now allows us to prove a useful fact  regarding  case  {I(b)}: 

\begin{prop} 
\label{holistic}
Let $(M,g,J)$  be a compact connected  Bach-flat K\"ahler surface with $\kappa> 0$. If $g$ is not K\"ahler-Einstein,   then the  conformally related
Einstein metric $h= s^{-2}g$ 
has holonomy $\mathbf{SO}(4)$. 
\end{prop}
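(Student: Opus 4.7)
The plan is to argue by contradiction: assume the holonomy of $h$ is a proper subgroup of $\mathbf{SO}(4)$. First observe that the scalar curvature $s$ of $g$ is necessarily non-constant, for if $s$ were constant then equation \eqref{rico} would yield $s\mathring{r}=0$, and since Proposition~\ref{pop} guarantees $s>0$ everywhere, this would force $\mathring{r}=0$, making $g$ itself K\"ahler-Einstein contrary to hypothesis. The contradiction we seek will be that reduced holonomy for $h$ ultimately forces $s$ to be constant after all.

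The first step is to observe that any proper reduction of the holonomy of the Einstein metric $h$ forces $h$ to be K\"ahler with respect to some parallel almost complex structure $I$ compatible with the given orientation. Indeed, $\mathrm{Hol}(h)\subset\mathbf{SU}(2)$ is ruled out since $h$ has positive Einstein constant $\kappa/4$, and by Berger's classification the only remaining irreducible reduction is $\mathrm{Hol}(h)\subset\mathbf{U}(2)$. If $\mathrm{Hol}(h)$ is reducible, then the de Rham decomposition (together with positivity of the Einstein constant) forces the universal cover to be isometric to a product $S^2\times S^2$ of two round $2$-spheres of equal curvature, which again admits a compatible parallel complex structure. In every case $(M,h,I)$ is a compact K\"ahler-Einstein surface with positive scalar curvature $\kappa$. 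The second step applies the K\"ahler Weyl formula \eqref{kahlerweyl} separately to $(g,J)$ and to $(h,I)$: each exhibits a simple distinguished eigenvalue of $W_+$ (namely $s/6$ and $\kappa/6$, both nonzero) whose eigenline in the conformally invariant subspace $\Lambda^+\subset\Lambda^2$ is spanned by the respective K\"ahler form $\omega_g=g(J\cdot,\cdot)$ or $\omega_h=h(I\cdot,\cdot)$. Since $W$ is conformally invariant as a $(1,3)$-tensor, the induced endomorphisms $W_+^g,W_+^h\colon\Lambda^+\to\Lambda^+$ differ only by a positive conformal factor, and in particular share all eigenlines. Thus $\omega_h=c\,\omega_g$ pointwise; combining this with $h=s^{-2}g$ and $I^2=J^2=-\mathrm{id}$ forces $c=\pm s^{-2}$ and hence $I=\pm J$, so that $h$ is in fact K\"ahler with respect to $J$ itself.

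In the final step, both $\omega_g$ and $\omega_h=s^{-2}\omega_g$ are closed K\"ahler forms on $(M,J)$, so $d(s^{-2})\wedge\omega_g=0$. In dimension four, wedging with the non-degenerate form $\omega_g$ is an isomorphism $\Lambda^1\to\Lambda^3$, so $d(s^{-2})=0$; hence $s$ is constant on the connected manifold $M$, contradicting the non-constancy established at the outset. The main technical obstacle lies in the second step: one must carefully verify that the splitting $\Lambda^2=\Lambda^+\oplus\Lambda^-$ is preserved under the conformal change $h=s^{-2}g$, that $W_+$ really rescales by a single positive scalar as an endomorphism of $\Lambda^+$, and that the parallel K\"ahler form produced by the reduced holonomy can be taken to lie in $\Lambda^+$ rather than $\Lambda^-$ with respect to the orientation induced by $J$. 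Once these rigidity facts are nailed down, pinning $I$ to $\pm J$ and extracting the contradiction from the closedness of the two K\"ahler forms is essentially algebraic.
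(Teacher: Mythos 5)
Your argument for the case in which the holonomy reduction produces a parallel {\em self-dual} $2$-form is correct and is actually a nice pointwise alternative to the paper's integral argument: since $s_g>0$ and $s_h=\kappa>0$, formula \eqref{kahlerweyl} gives each of $W_+^g$ and $W_+^h$ a simple distinguished eigenvalue whose eigenline is spanned by the respective K\"ahler form, and conformal invariance of $W_+$ as an endomorphism of $\Lambda^+$ then forces $\omega_h=\pm s^{-2}\omega_g$, whence $d(s^{-2})\wedge\omega_g=0$ and $s$ is constant --- contradicting the non-constancy you correctly established at the outset. (The paper instead derives the same-orientation contradiction from $\int|W_+|^2_h\,d\mu_h=\int|W_+|^2_g\,d\mu_g=\int \frac{s_g^2}{24}d\mu_g>\int\frac{s_h^2}{24}d\mu_h$, the strict inequality coming from the fact that Einstein metrics are Yamabe minimizers and $g$ is not Einstein.)

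However, there is a genuine gap in your second step: a proper reduction of the holonomy inside $\mathbf{SO}(4)\,{=}\,(\mathbf{SU}(2){\times}\mathbf{SU}(2))/\ZZ_2$ may just as well fix a nonzero {\em anti-self-dual} form, making $h$ K\"ahler with respect to a complex structure inducing the {\em opposite} orientation; $\mathbf{U}(2)$ sits in $\mathbf{SO}(4)$ in two orientation-inequivalent ways, and Berger's list cannot distinguish them. You flag this at the end as a technicality ``to be nailed down,'' but it cannot be arranged away --- it is a genuine case, and it is not vacuous (e.g.\ $\CP_2\#\overline{\CP}_2$, the Page metric, is orientation-reversing diffeomorphic to itself, so no cheap topological argument excludes it). In this case $W_-^h$, not $W_+^h$, is pinned down by the K\"ahler condition, while $W_-^g$ of the K\"ahler metric $g$ is unconstrained, so your eigenline comparison says nothing. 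The paper closes this case by first invoking Lemma \ref{lulu} to get $\tau(M)\le 0$, then combining the signature formula \eqref{th} with the strict inequality above to obtain $\int|W_-|^2_h\,d\mu_h\ge\int|W_+|^2_h\,d\mu_h>\int\frac{s_h^2}{24}d\mu_h$, which violates the reverse-oriented analogue of \eqref{sebastian}; some substitute for this input is indispensable. A secondary omission: to convert a restricted-holonomy reduction into a parallel form on $M$ itself (rather than on its universal cover) you need $M$ simply connected, which the paper establishes via Bochner ($b_1=0$) and the Enriques--Kodaira classification before the holonomy discussion even begins.
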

\begin{proof} By Proposition  \ref{pop},  $(M^4,g,J)$ must be  a strictly extremal K\"ahler surface, and Lemma \ref{lulu} therefore tells us that 
$(M,J)$ is a ruled surface with $\tau (M) \leq 0$. 
On the other hand, $M$ also 
admits a globally defined Einstein metric  $h=s^{-2}g$  with $\lambda > 0$, so Bochner's theorem \cite{bochner}
implies that $b_1(M)=0$.  
Surface classification \cite{bpv} therefore tells us that  $(M,J)$ is rational, and so in particular is   simply connected.

 Since $M^4$ is simply connected, the holonomy of any metric on $M$ thus coincides with its restricted holonomy, and so 
 is a compact connected Lie subgroup of $\mathbf{SO}(4)$. However, the action of $\mathbf{SO}(4)$ on $\Lambda^2 = \Lambda^+ \oplus \Lambda^-$ gives rise to an isomorphism 
$\mathbf{SO}(4)/\ZZ_2 = \mathbf{SO}(3)\times \mathbf{SO}(3)$, where the two copies of $\mathbf{SO}(3)$ act  on $\Lambda^+$ and $\Lambda^-$, respectively,
via the tautological $3$-dimensional representation. Thus, if the holonomy group were smaller than $\mathbf{SO}(4)$, its image in at least one  factor
$\mathbf{SO}(3)$ would have to be contained in $\mathbf{SO}(2)$, and 
some non-zero self-dual or anti-self-dual
$2$-form  would therefore  be fixed by the holonomy group. Rescaling this $2$-form $\alpha$ to have norm $\sqrt{2}$ with respect to $h$ and then extending it
as a parallel form to all of $M$,  the almost-complex structure $\mathscr{J}$ defined by $\alpha= h(\mathscr{J}\cdot , \cdot )$  would then be 
integrable, and  $(M,h, \mathscr{J})$ would then be a K\"ahler-Einstein manifold. More specifically,  a parallel self-dual $\alpha$ would give rise to a $\mathscr{J}$  compatible
with the same orientation as the original complex structure $J$, while  a parallel  anti-self-dual  $\alpha$ would instead 
give rise to a $\mathscr{J}$  compatible
with the opposite orientation. 

% 
% is oriented, the holonomy of any metric on $M$ is certainly contained in ,
% and since $M$ is  the 
% 
%  To see that the holonomy of 
%$h$ cannot be smaller, first notice that $(M,J)$ must have $c_1> 0$ in this case, because   can be shown
%\cite[Prop. 2]{leb4} 
%to be a positive $(1,1)$-form representing $2\pi c_1$; here $\rho = r(J, \cdot, \cdot )$ of course denotes the Ricci form of $(M,g,J)$. This in particular implies \cite{bpv,kobric} 
%that $M$ is simply connected. Thus the holonomy group coincides with the restricted  holonomy group, and so is a connected Lie subgroup of $\mathbf{SO}(4)$.

Now  the Bach-flat K\"ahler metric $g$ is non-Einstein 
by assumption. 
Since $h=s^{-2}g$ is  Einstein  and is conformally related to $g$, it therefore follows that 
$$\int_M s^2_g d\mu_g > \int_M s^2_h d\mu_h  ,$$
as can either be  read off  from the conformal invariance of 
$$\int_M \left( \frac{s^2}{24} - \frac{|\mathring{r}|^2}{2}\right) d\mu = 8\pi^2 \chi (M) - \int_M |W|^2 d\mu $$
or deduced from the more general fact  \cite{obata} that Einstein metrics are always Yamabe minimizers. 
Since the K\"ahler metric $g$ satisfies \eqref{sebastian},  we thus have 
$$\int_M |W_+|_h^2 d\mu_h = \int_M |W_+|_g^2 d\mu_g =  \int_M \frac{s^2_g}{24} d\mu_g > \int_M \frac{s^2_h}{24} d\mu_h. $$
It therefore follows that $h$ cannot satisfy \eqref{sebastian}, and so cannot  be K\"ahler in a manner compatible with the given orientation of $M$. On the other hand, 
equation \eqref{th}  tells us that 
$$\int_M |W_-|_h^2 d\mu_h = -12\pi^2 \tau (M) + \int_M |W_+|_h^2 d\mu_h$$
and since $\tau(M) \leq 0$, we therefore have  
$$\int_M |W_-|_h^2 d\mu_h \geq \int_M |W_+|_h^2 d\mu_h > \int_M \frac{s^2_h}{24} d\mu_h .$$
This shows that $h$ cannot satisfy the reverse-oriented analog of  \eqref{sebastian}, 
and hence  cannot be K\"ahler with respect to a reverse-oriented complex structure, either. 
This proves that, when $s_g$ is non-constant and positive,  the holonomy group of $(M,h)$ is exactly $\mathbf{SO}(4)$, as claimed. 
\end{proof}

Concerning case {II(b)}, we have the following:

\begin{prop}
\label{scalar-flat}
 If $\kappa=0$ and $(M,g,J)$ is not K\"ahler-Einstein, 
then  $(M,J)$ is a (possibly blown-up) ruled complex surface with $c_1^2 < 0$. 
Moreover,  no $4$-manifold homeomorphic to $M$ ever admits an Einstein metric. 
\end{prop}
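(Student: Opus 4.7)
The plan is to combine three ingredients: (i) the cohomological identity $c_1\cdot[\omega]=0$ implied by $\kappa=0$, (ii) the Enriques--Kodaira classification of compact K\"ahler surfaces, and (iii) the Hitchin--Thorpe inequality for Einstein $4$-manifolds.

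For (i), Proposition~\ref{sfk} tells us that $s\equiv 0$. Since the Ricci form $\rho$ represents $2\pi c_1(M,J)$ and its trace against $\omega$ is a constant multiple of $s$, integration yields
\[
c_1\cdot[\omega]\;=\;\tfrac{1}{4\pi}\!\int_M s\,d\mu_g\;=\;0.
\]
Scalar-flatness does not by itself force Ricci-flatness, but the assumption that $g$ is not K\"ahler--Einstein precisely rules out $\rho\equiv 0$ (which would give K\"ahler--Einstein with $\lambda=0$). Hence $c_1\neq 0$ in $H^2(M,\RR)$, and the Hodge index theorem applied to this nonzero $(1,1)$-class orthogonal to the K\"ahler class $[\omega]$ yields $c_1^2<0$.

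For (ii), I would exclude non-negative Kodaira dimension. Let $\pi:M\to M_{\min}$ denote the blow-down to the minimal model, so that $K_M=\pi^*K_{M_{\min}}+E$ for an effective exceptional divisor $E$ (vanishing exactly when $M=M_{\min}$). If the Kodaira dimension of $(M,J)$ is non-negative, then $K_{M_{\min}}$ is nef; since the pushforward of a nef class remains nef on K\"ahler surfaces, $\pi_*[\omega]$ is nef on $M_{\min}$, and the projection formula rewrites the scalar-flat condition as
\[
0\;=\;K_M\cdot[\omega]\;=\;K_{M_{\min}}\cdot\pi_*[\omega]\;+\;E\cdot[\omega].
\]
Both summands on the right are non-negative, so each must vanish: $E\cdot[\omega]=0$ forces $E=0$ (so $M=M_{\min}$), and then $K_M\cdot[\omega]=0$. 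If the Kodaira dimension is $0$, then $K_M$ is numerically trivial, contradicting $c_1^2<0$; if it is at least $1$, then $K_M$ is a nonzero nef class orthogonal to a K\"ahler class, so Hodge index gives $K_M^2<0$, contradicting nefness. Hence $(M,J)$ has Kodaira dimension $-\infty$ and is therefore rational or ruled; and since $c_1^2<0$ rules out $\CP_2$, the Hirzebruch surfaces, and $\CP_1\times\CP_1$, $M$ must be a proper blow-up of one of these---in other words, a blown-up ruled surface with $c_1^2<0$.

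For (iii), the identity $c_1^2=2\chi+3\tau$ valid on every compact complex surface, combined with $c_1^2<0$, gives $2\chi(M)+3\tau(M)<0$. This directly violates the Hitchin--Thorpe inequality $2\chi\geq 3|\tau|$ satisfied by every compact Einstein $4$-manifold, and because $\chi$ and $|\tau|$ depend only on the homeomorphism type, the obstruction extends to every smooth $4$-manifold homeomorphic to $M$. The delicate step I anticipate is the minimal-model argument in (ii): one must verify carefully that the pushforward of a K\"ahler class remains nef on the minimal model, and then apply Hodge index uniformly across Kodaira dimensions $\geq 1$. Everything else reduces to standard surface theory and the one-line topological inequality above.
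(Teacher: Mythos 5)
Your outline reaches the right conclusions, but there is a genuine gap in your derivation of $c_1^2<0$. You pass from ``$g$ is not K\"ahler--Einstein, so $\rho\not\equiv 0$'' to ``hence $c_1\neq 0$ in $H^2(M,\RR)$,'' and that inference is invalid as stated: a nowhere-vanishing hypothesis is not in play, and a nonzero closed $2$-form can perfectly well be exact. What rescues the step is precisely the observation at the heart of the paper's proof: when $s\equiv 0$ the decomposition $\rho=\frac{s}{4}\omega+\mathring{\rho}$ shows that $\rho=\mathring{\rho}$ is anti-self-dual, and since $\rho$ is also closed it is therefore \emph{harmonic}; a nonzero harmonic form represents a nonzero class, so $c_1\neq 0$ in $H^2(M,\RR)$ and your Hodge-index argument then goes through. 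But once you have anti-self-duality you may as well compute directly, as the paper does: $4\pi^2 c_1^2=\int_M\mathring{\rho}\wedge\mathring{\rho}=-\int_M|\mathring{\rho}|^2\,d\mu<0$, which yields the inequality and the equality case (Ricci-flatness) in one stroke, with no need for the index theorem. A second, more minor slip: ``rational or ruled'' includes geometrically ruled surfaces over curves of genus $\geq 2$, which are already minimal and have $c_1^2=8(1-g)<0$; so $M$ need not be a \emph{proper} blow-up of anything, and indeed the known type II(b) examples include minimal ruled surfaces over higher-genus curves. This does not affect the stated conclusion, which only asserts ``possibly blown-up.''

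Where you genuinely diverge from the paper is in excluding nonnegative Kodaira dimension. The paper argues intrinsically: for a pluricanonical section $\phi$ of $K^\ell$ one has $\Delta\log|\phi|^2=-\ell s=0$ off the zero set, so the strong maximum principle forces $|\phi|$ to be a nonzero constant, whence $\rho\equiv 0$ by $\ell\rho=i\partial\bar\partial\log|\phi|^2$ --- contradicting non-Einstein-ness; thus all plurigenera vanish. Your route instead descends to the minimal model, uses nefness of $K_{M_{\min}}$ for Kodaira dimension $\geq 0$, and plays the projection formula against $K_M\cdot[\omega]=0$. This is a legitimate alternative: it trades a short PDE argument for standard intersection-theoretic surface theory, at the cost of importing the nontrivial input that minimal surfaces of nonnegative Kodaira dimension have nef canonical class, and of checking the nefness of $\pi_*[\omega]$ as you note. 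The Hitchin--Thorpe step is identical to the paper's. Fix the harmonicity point and your proof is sound.
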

\begin{proof}
By Proposition \ref{sfk}, a Bach-flat K\"ahler surface $(M,g,J)$ with $\kappa =0$ must have $s\equiv 0$. 
Since one can decompose the the Ricci form $\rho$ of any K\"ahler surface  as 
$$\rho = \frac{s}{4} \omega + \mathring{\rho}$$
where $\mathring{\rho}\in \Lambda^-= \Lambda^{1,1}_\RR \cap (\omega)^\perp$ is the primitive part of $\rho\in \Lambda^{1,1}$, we see  that $\rho$ is anti-self-dual 
whenever $g$ is scalar-flat K\"ahler. It follows \cite{lsd} that any scalar-flat K\"ahler surface satisfies
$$4\pi^2 c_1^2 (M, J) =  \int_M \mathring{\rho}\wedge \mathring{\rho} = -\int_M \mathring{\rho}\wedge \star \mathring{\rho} =-\int_M |\mathring{\rho}|^2 d\mu,$$
so that $c_1^2\leq 0$, with equality if and only if $g$ is Ricci-flat; cf. \cite{laf}.  In particular, if $(M,g,J)$ is not K\"ahler-Einstein, 
we have
$$(2\chi + 3\tau )(M) = c_1^2 (M,J) < 0.$$
However, the Hitchin-Thorpe inequality \cite{bes,gray-ht,hit,tho} tells us that the homotopy invariant $(2\chi + 3\tau )(M)$
must be non-negative for any compact oriented $4$-dimensional Einstein manifold. Thus, if $\kappa =0$ and 
$(M,g,J)$ is not K\"ahler-Einstein, $M$ cannot even be homeomorphic to an Einstein manifold. 

On the other hand, the plurigenera $p_\ell(M) = h^0(M, \mathcal{O} (K^\ell))$, $\ell > 0$, 
 of a non-Ricci-flat scalar-flat K\"ahler surface must all vanish \cite{lsd,yauruled}. Indeed, if
$\phi$  is a holomorphic section of $K^{\ell}$, the Ricci form  satisfies 
\begin{equation}
\label{plurigenus}
\ell \rho = i\partial\bar{\partial}\log |\phi|^2
\end{equation}
away from the zero locus of $\phi$. 
Taking the inner product of both sides with $\omega$  thus yields
$$ 0 = -\ell s = \Delta \log |\phi|^2$$
wherever $\phi\neq 0$. Since $|\phi|^2$ must have a maximum on $M$, the strong maximum principle \
\cite{giltrud} therefore says that $\log |\phi|^2$ is constant away from the zero set of $\phi$.
If $\phi$ does not vanish identically, it therefore  has constant non-zero norm, and 
equation \eqref{plurigenus} then says that $(M,g)$ is  Ricci-flat. 
If $(M,g,J)$ is  not K\"ahler-Einstein, its  plurigenera must therefore all vanish, as claimed. The Kodaira dimension of $(M,J)$ is therefore
$-\infty$, and the Enriques-Kodaira classification \cite{bpv} therefore 
tells us that $(M,J)$ is rational or ruled. Morever, since $c_1^2 < 0$, it certainly can't be $\CP_2$. We thus   conclude that  $(M,J)$  can actually be 
obtained from some ruled surface by blowing up $\geq 0$ points. 
\end{proof}

Putting these facts together, we now immediately have:

\begin{prop} 
\label{planb}
Suppose that $(M,g,J)$ is  a compact connected Bach-flat K\"ahler surface which is not K\"ahler-Einstein. Then $(M,J)$ is ruled.
\end{prop}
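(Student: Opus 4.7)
The plan is to assemble the trichotomy of Section~\ref{tri} with the case-specific results already proved, so that each of the three sign possibilities for $\kappa$ reduces either directly to a previous proposition or to Lemma~\ref{lulu}. The only mild subtlety is showing that in the $\kappa>0$ and $\kappa<0$ cases the non-K\"ahler-Einstein hypothesis forces $g$ to be \emph{strictly} extremal, after which Lemma~\ref{lulu} delivers the result.

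First I would dispose of the $\kappa=0$ case, which is trivial: Proposition~\ref{scalar-flat} already asserts that $(M,J)$ is a (possibly blown-up) ruled surface.

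For $\kappa>0$, Proposition~\ref{pop} tells us $s>0$ everywhere. If $s$ were constant, then equation \eqref{rico} would force $s\mathring{r}=0$ and hence $\mathring{r}\equiv 0$, making $g$ K\"ahler-Einstein, contrary to hypothesis. Thus $s$ is non-constant, $(M,g,J)$ is strictly extremal, and Lemma~\ref{lulu} yields that $(M,J)$ is ruled. For $\kappa<0$, the same reasoning applies: if $s$ were constant, Lemma~\ref{sign} would give $s\equiv \min s <0$, and then \eqref{rico} would again force $\mathring{r}\equiv 0$ and make $g$ K\"ahler-Einstein. So once more $s$ is non-constant, $g$ is strictly extremal, and Lemma~\ref{lulu} concludes.

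The main obstacle is really just the uniform observation that ``non-K\"ahler-Einstein'' upgrades to ``strictly extremal'' once one knows $s$ does not vanish identically, since constancy of $s$ plus \eqref{rico} immediately gives $\mathring{r}=0$ wherever $s\neq 0$. This is a short calculation, and the conclusion then follows from the previously established machinery with no further work.
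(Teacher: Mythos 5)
Your proposal is correct and follows essentially the same route as the paper: the key step in both is that the non-K\"ahler-Einstein hypothesis together with equation \eqref{rico} rules out constant nonzero $s$, so that either $s\equiv 0$ (handled by Proposition~\ref{scalar-flat}) or $g$ is strictly extremal (handled by Lemma~\ref{lulu}). The only difference is that you organize the cases by the sign of $\kappa$, while the paper cases directly on $s\equiv 0$ versus $s$ non-constant; this is a cosmetic reorganization.
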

\begin{proof} If 
$g$ has $\mathring{r}\not\equiv 0$, 
 equation \eqref{rico}  tells us that the Bach-flat K\"ahler manifold $(M,g,J)$  has either  $s$ non-constant   or  $s\equiv 0$. 
If  $g$ has $s\equiv 0$, then  Proposition \ref{scalar-flat} then tells us that $(M,J)$ is ruled. Otherwise, 
$g$ must be a strictly extremal K\"ahler metric, and Lemma \ref{lulu} then again  guarantees that  $(M,J)$ is ruled, as claimed. 
\end{proof}

Finally, regarding solutions of type {III(b)}, we have the following:

\begin{prop} 
\label{omphalos} 
Let  $(M,g,J)$ be  a compact connected Bach-flat K\"ahler surface with $\kappa < 0$  that  is not K\"ahler-Einstein. 
Then the connected real hypersurface $\mathcal{Z}\subset M$ is totally umbilic with respect to $g$. Moreover, the Weyl curvature
$W$ of $(M,g)$ vanishes identically along $\mathcal{Z}$. 
\end{prop}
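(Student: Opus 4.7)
The plan is to establish the proposition in three pieces: the total umbilicity of $\mathcal{Z}$, the vanishing of $W_+$ along $\mathcal{Z}$, and the vanishing of $W_-$ along $\mathcal{Z}$. The first two are essentially immediate from what has already been done; the third is the substantive step.

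For umbilicity, Proposition~\ref{understand} makes $0$ a regular value of $s$, and equation \eqref{capital} gives $|\nabla s|^2 = -\kappa/12 > 0$ along $\mathcal{Z}$, so $N:=\nabla s/|\nabla s|$ is a smooth unit normal. Since $s \equiv 0$ on $\mathcal{Z}$, the Bach-flat identity \eqref{rico} degenerates there to $\Hess_0(s)|_{\mathcal{Z}}=0$, i.e., $\nabla\nabla s$ is a pure-trace multiple of $g$ all along $\mathcal{Z}$. For $X,Y\in T\mathcal{Z}$, any contribution from differentiating the normalizing factor $|\nabla s|^{-1}$ drops out because $g(N,Y)=0$, and one obtains
\[
g(\nabla_X N, Y) \;=\; \frac{(\nabla\nabla s)(X,Y)}{|\nabla s|},
\]
so the second fundamental form is proportional to $g|_{\mathcal{Z}}$. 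The vanishing of $W_+$ on $\mathcal{Z}$ is then immediate from \eqref{kahlerweyl}, whose right-hand side carries an overall factor of $s$.

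For $W_-$, my plan is to use the $W_-$-version of the Bach tensor formula. The identity $(\nabla^a\nabla^b+\tfrac12 r^{ab})(\star W)_{cabd}=0$ recorded in \S\ref{pdq} implies that $B_{ab}$ can equivalently be written as $(2\nabla^c\nabla^d + r^{cd})(W_-)_{acbd}$. Setting this to zero and evaluating along $\mathcal{Z}$, I would insert (i) the vanishing $W_+|_{\mathcal{Z}}=0$, (ii) the umbilicity just proven (which controls the extrinsic geometry of $\mathcal{Z}$), (iii) the expression for $\mathring{r}$ along $\mathcal{Z}$ obtained by taking the limit of $\mathring{r} = -\tfrac{2}{s}\Hess_0(s)$ as $s\to 0$, and (iv) the scalar identity $|\nabla s|^2 = -\kappa/12$ on $\mathcal{Z}$. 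In combination with the K\"ahler relation $\nabla J=0$, these inputs should reduce the second-order differential identity to a pointwise algebraic constraint on $\mathcal{Z}$ that forces $W_-|_{\mathcal{Z}} = 0$.

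The main obstacle is exactly this last step. Unlike $W_+$, the anti-self-dual Weyl tensor is not pinned down by $s$ and the K\"ahler data, so one cannot simply read off its vanishing from a local formula. Extracting a pointwise constraint from the second-order identity $(2\nabla^c\nabla^d + r^{cd})(W_-)_{acbd}=0$ requires a careful tangential/normal decomposition along $\mathcal{Z}$ and a clean accounting of the way the Einstein structure of $h=s^{-2}g$ degenerates as one approaches the conformal infinity $\mathcal{Z}$.
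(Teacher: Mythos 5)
Your treatment of umbilicity and of $W_+|_{\mathcal{Z}}=0$ is correct and is essentially the paper's own argument: $0$ is a regular value of $s$, $|\nabla s|^2=-\kappa/12$ on $\mathcal{Z}$ makes $\nu$ a legitimate unit (co)normal, equation \eqref{rico} forces $\Hess_0(s)=0$ where $s=0$, so the second fundamental form is $-\left(\Delta s/4\right)|\nabla s|^{-1}\,\check{g}$ up to normalization, and \eqref{sebastian} kills $W_+$ along $\mathcal{Z}$.

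The vanishing of $W_-$, however, is exactly the substantive claim, and your sketch does not prove it. The identity $B_{ab}=(2\nabla^c\nabla^d+r^{cd})(W_-)_{acbd}$ is indeed correct (it follows from \eqref{bach2} and the displayed identity for $\star W$ in \S\ref{pdq}), but it is a second-order differential identity satisfied by $W_-$ at \emph{every} point of $M$; an equation of the form $(2\nabla^c\nabla^d+r^{cd})T_{acbd}=0$ constrains the $2$-jet of $T$ and carries no pointwise information about the value of $T$ on a hypersurface --- just as $\Delta u=0$ does not force a harmonic function to vanish along any given hypersurface. No amount of substituting $W_+|_{\mathcal{Z}}=0$, umbilicity, or $|\nabla s|^2=-\kappa/12$ into that equation will produce the ``pointwise algebraic constraint'' you hope for, and you concede that you do not see how to carry out this step. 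The missing idea is different in kind: since $W_+=0$ along $\mathcal{Z}$, the algebraic curvature identity
$$W_- (\varphi - \star \varphi, \varphi - \star \varphi) -  W_+ (\varphi + \star \varphi, \varphi + \star \varphi) = - 4 \mathcal{R} (\varphi , \star \varphi ),$$
applied to $\varphi=\nu\wedge\theta$ for $\theta\in T^*\mathcal{Z}$ (using that $\theta\mapsto \nu\wedge\theta-\star(\nu\wedge\theta)$ trivializes $\Lambda^-|_{\mathcal{Z}}$), shows that $W_-|_{\mathcal{Z}}$ is completely determined by the Codazzi-type components ${\mathcal{R}^1}_{234}$ of the full curvature in orthonormal coframes with $e^1=\nu$. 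Now invoke conformal invariance of $W$: a totally umbilic hypersurface can be made totally geodesic by a conformal rescaling, and in that gauge $\nu$ is parallel along $\mathcal{Z}$, so these components vanish. Hence $W=W_-=0$ along $\mathcal{Z}$. Without this (or some equivalent use of the Poincar\'e--Einstein/Fefferman--Graham boundary regularity of $h=s^{-2}g$, which the paper cites as an alternative), your proof is incomplete at its central point.
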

\begin{proof} While both of these features are general consequences \cite{fefgra,lebhspace} of the fact that $h=s^{-2}g$ is Poincar\'e-Einstein, 
we will give quick, self-contained proofs that  supply further information in the present special context. 

Let $\check{g}= g|_{\mathcal{Z}}$ denote the induced Riemannian metric (or {\em first fundamental form}) of our hypersurface,  
and let $\gemini$ denote the second fundamental form (or {\em shape tensor}) of $\mathcal{Z}$. To say that $\mathcal{Z}$ is totally 
umbilic just means that $\gemini = {\zap f} \check{g}$ for some function ${\zap f} : \mathcal{Z} \to \RR$, which is  then  just the mean curvature of $\mathcal{Z}$. 
However,   the second fundamental form  can be expressed as    $\gemini = (\nabla \nu)|_{\mathcal{Z}}$
in terms  of any  unit   $1$-form   $\nu$  on $M$ which is  normal to $\mathcal{Z}$ along this hypersurface, and which is 
compatible with the given orientations of $M$ and $\mathcal{Z}$.  
However,  since $\mathcal{Z}$ is defined by $s=0$, and since equation \eqref{capital} tells us that 
$|\nabla s|^2 = - \frac{\kappa}{12}$
along $\mathcal{Z}$, it follows  that $\nu = 2\sqrt{3/|\kappa|} \, ds$ is a valid choice for this  unit  co-normal  field, provided
 we orient $\mathcal{Z}$  in the corresponding manner.  Thus 
$$\gemini = \left. \left( 2\sqrt{\frac{3}{|\kappa|}}  \Hess s \right)\right|_{\mathcal{Z}}$$
gives us a  convenient formula  for the second fundamental form of $\mathcal{Z}$. However,  equation \eqref{johann} 
tells us that 
$
\Hess_0 s =0
$
along the locus  where $s=0$, so 
$\Hess s = - (\Delta s/4)g$
along $\mathcal{Z}$, and hence 
\begin{equation}
\label{omphaloi}
 \gemini  = - \left( \frac{\sqrt{3}}{2\sqrt{|\kappa|}}\Delta s\right) \check{g}.
\end{equation}
This shows that $\mathcal{Z}$ is totally umbilic, as claimed. 

On the other hand, since the K\"ahler metric $g$ has $s=0$ along $\mathcal{Z}$, equation \eqref{sebastian} tells us that $W_+=0$ there, too. 
Thus $W= W_-$ at $\mathcal{Z}$. However, if we let  $\nu$ denote the  unit $1$-form normal to  $\mathcal{Z}$,
there is a natural bundle isomorphism 
\begin{eqnarray*} 
T^*\mathcal{Z}&\longrightarrow& \Lambda^-|_\mathcal{Z}\\
\theta &\longmapsto&(\nu \wedge \theta) - \star (\nu \wedge \theta).
\end{eqnarray*}
Applying the identity 
$$W_- (\varphi - \star \varphi, \varphi - \star \varphi) -  W_+ (\varphi + \star \varphi, \varphi + \star \varphi) = - 4 \mathcal{R} (\varphi , \star \varphi ),$$
to $\varphi = \nu \wedge \theta$, and remembering that $W_+=0$ along $\mathcal{Z}$, 
we therefore see that $W=W_-$ is completely characterized along $\mathcal{Z}$ by knowing    ${{\mathcal{R}^1}_{234}=W^1}_{234}$ for those  
orthonormal co-frame $\{ e^1, \ldots , e^4\}$ in  which $e^1=\nu$. 
But we can easily understand these components, using the fact that $W$ is conformally invariant. Indeed, since a 
conformal change $g\rightsquigarrow u^2 g$ changes the second fundamental form by $\gemini\rightsquigarrow u\gemini + \langle du, \nu\rangle \check{g}$, a hypersurface
is totally umbilic  iff it can be made totally geodesic by  a conformal change. But the unit normal $1$-form  
$\nu$ is parallel along any  totally geodesic hypersurface. In such a conformal gauge, we therefore  have  ${\mathcal{R}^1}_{234} ={W^1}_{234}=0$. Conformal invariance thus  
 allows us to deduce that 
$W$ vanishes identically along $\mathcal{Z}$, as claimed. 
\end{proof}

Theorem \ref{summary} is now an immediate consequence. Indeed, the basic trichotomy into solution classes {I}, {II}, and {III} just reflects the sign of 
$\kappa$, or equivalently, by Lemma \ref{sign}, the sign of $\min s$. 
 Propositions \ref{sfk}, \ref{pop}, and \ref{understand} then explain the basic features of each class. If the solution is K\"ahler-Einstein, 
it is then of type {I(a)}, {II(a)}, or {III(a)}, depending on the sign of $\kappa$. Otherwise, the underlying complex surface is ruled by Proposition \ref{planb},
and the remaining  claims made about solutions of type {I(b)}, {II(b)}, and {III(b)} are then proved by Propositions  \ref{holistic},  \ref{scalar-flat}, 
and \ref{omphalos}, respectively.

\section{Problems and Perspectives} 
\label{conclusion}

%%s> 0 case...

\bigskip

As mentioned in \S \ref{intro},  solutions of class {I} have been completely classified \cite{lebuniq}. 
One key fact that enabled this classification was the observation \cite{lebhem}  that  in this case 
$(M,J)$ has  $c_1> 0$,    and  hence is  a Del Pezzo surface \cite{delpezzo}. 
Indeed, since $s\neq 0$,  we can represent $2\pi c_1$ by  $\tilde{\rho}:=\rho + 2i\partial\bar{\partial} \log |s|$, 
where $\rho$ is the Ricci form of $(M,g,J)$, 
and equation \eqref{rico} then tells us that  
$\tilde{\rho}=q(J\cdot, \cdot)$, where the symmetric tensor field  $q$ is given by 
\begin{equation}
\label{rep}
q = \frac{2s+ \kappa s^{-2}}{12} g + s^{-2} |\nabla s|^2 g^\perp .
\end{equation}
Here, $g^\perp$ is defined to be zero on the span of $\nabla s$ and $J\nabla s$, but to coincide with $g$ on the orthogonal complement of this subspace; 
while this of course means that $g^\perp$ is only defined away from the critical points of $s$,  the tensor field  $|\nabla s|^2g^\perp$  has a unique smooth extension
across the critical points, which is explicitly given by declaring it to be zero at this exceptional set. Since $\kappa > 0$ implies that $s> 0$ everywhere, 
it follows that $q > 0$ for a solution of type {I}, and that $(M,J)$ is therefore 
a Del Pezzo surface when $\kappa$ is positive. Conversely, one can  show \cite{chenlebweb,tian,sunspot}
that every Del Pezzo $(M,J)$ admits a $J$-compatible class-{I} solution, and that this solution is moreover unique \cite{bandomab,lebuniq} up to complex automorphism and rescaling. In fact, the solution is K\"ahler-Einstein except in exactly two cases, namely  the blow-up  of $\CP_2$ at one or two distinct points. 
Thus, solutions of type {I(a)} and {I(b)} are distinguished by  whether or not the Lie algebra of holomorphic vector fields on $(M,J)$ is reductive. 

By further elaboration on   this idea, one is led to the following  result: 

\begin{prop}
\label{beth}
Let $g$ and $\tilde{g}$ be be two  $J$-compatible Bach-flat K\"ahler metrics on the same compact complex surface $(M,J)$.
If these two solutions have different types, according to the classification scheme of Theorem \ref{summary}, then one is of type {\rm II(b)}, and the other
is of type {\rm III(b)}. 
\end{prop}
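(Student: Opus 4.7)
The plan is to compare, for each pair of distinct types, the complex-analytic invariants of $(M,J)$ that each type pins down, and check pair by pair that these invariants are compatible only when $\{X,Y\}=\{\text{II(b)},\text{III(b)}\}$.

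To set up, I would record: Type I forces $(M,J)$ to be a Del Pezzo surface (so $c_1>0$, and in particular $c_1^2>0$), because $s>0$ makes the representative $q$ of $2\pi c_1$ in formula \eqref{rep} pointwise positive; Type II(a) forces $c_1=0$ in real cohomology; Type II(b) forces $c_1^2<0$ by Proposition \ref{scalar-flat}; Type III(a) forces $c_1<0$; and Type III(b) forces $(M,J)$ to be ruled of Kodaira dimension $-\infty$ by Proposition \ref{planb}. Moreover, within class I the subtypes I(a) and I(b) are distinguished by whether $\mathrm{Aut}_0(M,J)$ is reductive, a purely intrinsic property of $(M,J)$.

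Next I would enumerate the $\binom{6}{2}=15$ unordered pairs of distinct types. Nine pairs, in which the two types impose incompatible constraints on $c_1$ — namely (I,II(a)), (I,II(b)), (I,III(a)), (II(a),II(b)), (II(a),III(a)), and (II(b),III(a)), treating the two subtypes of I together — are excluded immediately. Two further pairs, (II(a),III(b)) and (III(a),III(b)), are impossible because Kodaira dimensions $0$ and $2$ respectively cannot equal $-\infty$. The pair (I(a),I(b)) is impossible because $\mathrm{Aut}_0(M,J)$ is either reductive or not. Together with the allowed pair (II(b),III(b)) itself, that disposes of thirteen of the fifteen cases.

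The remaining pairs (I(a),III(b)) and (I(b),III(b)) both amount to the assertion that a Del Pezzo surface admits a Bach-flat K\"ahler metric of type III(b), and excluding this is the main obstacle. My plan is to show that any Bach-flat K\"ahler metric of type III(b) satisfies $c_1^2\leq 0$, directly contradicting the Del Pezzo condition $c_1^2>0$. The starting point is the Chern--Weil identity for K\"ahler surfaces,
\[
4\pi^2\, c_1^2 \;=\; \int_M \Bigl(\tfrac{s^2}{8}-\tfrac{|\mathring r|^2}{2}\Bigr)\, d\mu,
\]
combined with the Bach-flat K\"ahler relation $s\,\mathring r=-2\,\Hess_0 s$ (so that $s^{2}|\mathring r|^{2}=4|\Hess_0 s|^{2}$ as a smooth identity on all of $M$), the Bochner formula for the function $s$, and the contracted Bianchi identity $\nabla^a\mathring r_{ab}=\tfrac14\nabla_b s$, in order to rewrite the right-hand side in a form whose non-positivity is manifest — with the sign coming precisely from the fact that $s$ changes sign in type III(b), so that the singular factor $1/s$ in $\mathring r$ magnifies the $|\Hess_0 s|^{2}$ contribution relative to $s^2$. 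Pinning down this integral inequality is the step I expect to be the most delicate, and is the heart of the argument.
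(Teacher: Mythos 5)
Your reduction of the problem is sound and matches the paper's: the pairs involving II(a) or III(a) are killed by Kodaira dimension and the sign of $c_1$, the pair (I(a),I(b)) by reductivity of $\Aut_0(M,J)$, and (I,II(b)) by the sign of $c_1^2$, so everything comes down to ruling out the coexistence of a class I solution with a type III(b) solution. But the step you yourself flag as the heart of the argument is based on a false inequality. A type III(b) solution does \emph{not} satisfy $c_1^2\leq 0$: by the results of Hwang and Simanca cited in the paper \cite{hwasim}, every Hirzebruch surface $\mathbb{P}(\mathcal{O}\oplus\mathcal{O}(\ell))\to\CP_1$ with $\ell>2$ carries a Bach-flat K\"ahler metric of type III(b), and every Hirzebruch surface has $c_1^2=8>0$. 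Since $c_1^2$ is a topological invariant, no amount of manipulation of the Chern--Weil integrand $\frac{s^2}{8}-\frac{|\mathring{r}|^2}{2}$ using \eqref{rico}, Bochner, and Bianchi can make that integral come out nonpositive for these metrics; the ``magnification'' you hope to extract from the factor $1/s$ where $s$ changes sign simply is not there. (Indeed $c_1^2$ cannot separate class I from type III(b) at all, since Del Pezzo surfaces and the higher Hirzebruch surfaces share the value $c_1^2=8$ in many cases.)

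The paper closes this gap by a different invariant: it invokes the fact \cite[Lemmata A.2 and B.2]{lebhem10} that \emph{every} extremal K\"ahler metric on a Del Pezzo surface has positive scalar curvature. Since a Bach-flat K\"ahler metric is extremal, and since a class I solution forces $(M,J)$ to be Del Pezzo, any other $J$-compatible Bach-flat K\"ahler metric on the same surface then has $s>0$, hence $\min s>0$, hence $\kappa>0$ by Lemma \ref{sign}, and is therefore also of class I. This disposes of (I,II(b)) and (I,III(b)) simultaneously. If you want a self-contained argument you would need to reprove that positivity statement, which is a genuinely nontrivial result about extremal metrics on surfaces with $c_1>0$; replacing it by a $c_1^2$ computation cannot work.
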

\begin{proof}
Solutions of type {II(a)} and {III(a)} are distinguished from the others by the Kodaira dimension of $(M,J)$. On the other hand, we have just seen that solution of 
class {I} exist precisely on complex surfaces with $c_1 > 0$, and   solutions  of type {I(a)} are then  distinguished from those of type {I(b)} by  whether 
the  Lie algebra of holomorphic vector fields is reductive. 
It thus  only remains  to show that the existence of solution of class {I} precludes the existence of 
a solution of type {II(b)} or {III(b)}. However, any extremal K\"ahler metric on a Del Pezzo surface has positive scalar curvature \cite[Lemmata A.2 and B.2]{lebhem10}.
Consequently, the presence of  a Bach-flat K\"ahler metric of class {I} automatically precludes the existence of a solution of any other class.\end{proof}

This makes the following piece of %wild    
speculation seem irresistible:

%  that solutions of two different types could co-exist on the same compact complex surface:

\begin{conj} 
\label{wild}  
On a fixed compact complex surface $(M,J)$, any 
 pair of  \linebreak $J$-compatible Bach-flat K\"ahler metrics are necessarily of  the same type, 
in the sense of    Theorem \ref{summary}. 
\end{conj}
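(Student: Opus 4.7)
The plan is to leverage Proposition~\ref{beth}, which already reduces the conjecture to a single remaining case: no compact complex surface $(M,J)$ can admit both a Bach-flat K\"ahler metric $g_2$ of type II(b) and a Bach-flat K\"ahler metric $g_3$ of type III(b). Propositions~\ref{scalar-flat} and \ref{understand} tell us that both types force $(M,J)$ to be a (possibly blown-up) ruled surface of Kodaira dimension $-\infty$, so any obstruction must be subtler than the Enriques--Kodaira class of $(M,J)$.

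The first line of attack I would pursue exploits the K\"ahler classes $\Omega_2 = [\omega_2]$ and $\Omega_3 = [\omega_3]$. Since $g_2$ is scalar-flat, the standard integration identity $\int_M s\, d\mu = 4\pi\, c_1(M,J)\cdot[\omega]$ on a K\"ahler surface forces $c_1(M,J)\cdot \Omega_2 = 0$, so $\Omega_2$ lies on the hyperplane $c_1^{\perp}\subset H^{1,1}(M,\mathbb{R})$. For $g_3$, the scalar curvature is non-constant, Calabi's theory links the relevant Futaki-type invariant to $\Omega_3$, and essential uniqueness of extremal K\"ahler metrics in a fixed class forces $\Omega_2 \ne \Omega_3$. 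Complementing this, the additional rigidity of Proposition~\ref{omphalos}---a totally umbilic separating hypersurface $\mathcal{Z}\subset M$ along which $W$ vanishes, produced by the III(b) metric---ought to interact non-trivially with the global vanishing $W_+\equiv 0$ enforced by the II(b) metric. The goal would then be to show that the K\"ahler cone of $(M,J)$ cannot simultaneously contain both a scalar-flat class sitting in $c_1^{\perp}$ and a III(b) class supporting a Poincar\'e--Einstein compactification, perhaps by deforming $\Omega_2$ and $\Omega_3$ along a path in the K\"ahler cone and tracing how the type of any Bach-flat representative would have to jump discontinuously as one crosses from one regime to the other.

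The main obstacle is precisely the one the author signals by labelling the conjecture ``wild'': beyond the Kodaira-dimension and $c_1^2$ constraints already extracted in \S\ref{rapt}, no strong a priori obstruction presently separates the loci of K\"ahler classes that should admit II(b) or III(b) representatives. Both lines of attack sketched above are hindered by the absence of a general existence theory for Bach-flat K\"ahler metrics in prescribed K\"ahler classes, so substantive progress probably requires a genuinely new global invariant---one sensitive to the qualitative distinction between the vanishing of $W_+$ and the emergence of a conformal Einstein structure along a $3$-dimensional CR-type locus---rather than a refinement of the tools developed earlier in this paper.
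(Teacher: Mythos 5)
The statement you were asked to prove is labelled as a \emph{conjecture}, and the paper does not prove it: it is explicitly left open. So there is no proof in the paper to compare yours against, and your proposal --- which candidly ends by conceding that the decisive step is missing --- does not close the gap either. What you do get right is the reduction: Proposition~\ref{beth} shows that the only way two $J$-compatible solutions could have different types is for one to be of type II(b) and the other of type III(b), and this is exactly the paper's own framing of what remains to be excluded. Your observation that a II(b) metric forces its K\"ahler class onto the hyperplane $c_1^\perp$ is correct ($\int_M s\,d\mu = 4\pi\, c_1\cdot[\omega]$), but it leads nowhere: two distinct Bach-flat metrics on the same surface will in general lie in distinct K\"ahler classes anyway, so showing $\Omega_2\neq\Omega_3$ produces no contradiction. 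Nothing in your sketch rules out the coexistence of a scalar-flat class and a strictly extremal Bach-flat class on one ruled surface.

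For the record, the paper does establish two things adjacent to the conjecture that your proposal does not mention. First, it proves the conjecture under the additional hypothesis $\tau(M)=0$ (which covers all currently known III(b) examples): there, a II(b) metric is anti-self-dual and scalar-flat, hence conformally flat by \eqref{th}, which forces $(M,J)$ to be a geometrically ruled surface $\Sigma\times_\gamma\CP_1$ whose Futaki invariant vanishes on an open set of the K\"ahler cone, hence identically by real-analyticity; this precludes any strictly extremal metric and therefore any III(b) solution. Second, it observes that Conjecture~\ref{thing} (Bach-flat K\"ahler metrics are absolute minimizers of the Calabi energy) would immediately imply Conjecture~\ref{wild}, since a II(b) solution has $\mathcal{C}=0$ while a III(b) solution has $\mathcal{C}>0$. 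That Calabi-energy route is sharper than the K\"ahler-cone deformation you sketch, and is probably the right invariant to pursue; your instinct that a ``genuinely new global invariant'' is needed is essentially the author's own assessment.
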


Although the currently known solutions of type {III(b)} will almost certainly 
 turn out to be atypical in many respects,   these   known examples  all live on geometrically ruled surfaces,
which necessarily have   $\tau =0$. 
The following result thus explains why Conjecture \ref{wild} is supported by all  known examples: 

\begin{prop}
No compact complex surface $(M,J)$ of signature $\tau=0$ can  admit a pair of $J$-compatible 
Bach-flat K\"ahler metrics that have different types,  in the sense of    Theorem \ref{summary}. 
\end{prop}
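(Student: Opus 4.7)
The plan is to invoke Proposition~\ref{beth} to reduce matters to showing that no compact complex surface $(M,J)$ with $\tau(M)=0$ can simultaneously admit a Bach-flat K\"ahler metric of type~II(b) and one of type~III(b). The approach hinges on the fact that a type~II(b) metric becomes exceptionally rigid once $\tau=0$ is imposed: if $g$ is such a metric, then $s_g\equiv 0$, so \eqref{sebastian} forces $W_+^g\equiv 0$, and the signature formula \eqref{th} then gives
$$\int_M |W_-^g|^2\, d\mu_g = -12\pi^2\tau(M) + \int_M|W_+^g|^2\, d\mu_g = 0,$$
so that $W_-^g\equiv 0$ as well. Thus $g$ is locally conformally flat.

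Next, I would appeal to the classical structural result for compact conformally flat K\"ahler surfaces: such a surface is either flat, or locally isometric to a Riemannian product of two Riemann surfaces with equal and opposite constant Gauss curvatures. The flat case would give a K\"ahler-Einstein torus and contradict type~II(b); so $(M,g)$ must be of product type, and $(M,J)$ is biholomorphic to a holomorphic quotient of $H^2\times\CP_1$ by a torsion-free cocompact subgroup $\Gamma\subset \mathbf{PSL}(2,\RR)\times\mathbf{PU}(2)$. In particular, $(M,J)$ is realized as the projectivization $\mathbb{P}(E)\to\Sigma_h$ of a flat $\mathbf{SU}(2)$-bundle $E$ over a compact Riemann surface $\Sigma_h$ of genus $h\geq 2$.

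With this rigid structure in hand, I would show that no strictly extremal K\"ahler metric exists on such an $(M,J)$, which by definition would rule out type~III(b). Three sub-cases arise according to the monodromy representation $\Gamma\to\mathbf{PU}(2)$: if the image is Zariski-dense in $\mathbf{PU}(2)$, then $\Aut_0(M,J)$ is trivial and there are no non-zero holomorphic Killing fields at all; if the image factors through $\mathbf{U}(1)\subset\mathbf{PU}(2)$, then $E\cong L\oplus L^{-1}$ for a flat line bundle $L$ of degree $0$, so that $(M,J)\cong\mathbb{P}(\mathcal{O}\oplus L^{-2})$, and the standard Calabi--Futaki formula for such a ruled surface evaluates to a multiple of $\deg(L^{-2})=0$, forcing the Futaki character to vanish on every K\"ahler class; and if $\Gamma$ acts trivially on the $\CP_1$-factor, we recover $(M,J)=\Sigma_h\times\CP_1$, for which $\mathfrak{aut}_0=\mathfrak{sl}(2,\CC)$ is semisimple and admits no non-trivial character. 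In every sub-case, Calabi's characterization of extremal K\"ahler metrics then forces any extremal K\"ahler metric on $(M,J)$ to have constant scalar curvature, precluding the strictly extremal metric demanded by type~III(b).

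The main obstacle I anticipate is completing the case analysis outlined in the second and third paragraphs: the local structure theorem for conformally flat K\"ahler surfaces is classical, but pinning down the global biholomorphism class of $(M,J)$ and then ruling out strictly extremal metrics across all possible monodromy representations of $\Gamma$ requires care. Once that case analysis is in place, however, the Futaki-invariant computations and the appeal to Calabi's theorem are routine.
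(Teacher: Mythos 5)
Your reduction via Proposition \ref{beth} and the first half of your argument coincide with the paper's proof: a type II(b) metric has $s\equiv 0$, hence $W_+\equiv 0$ by \eqref{sebastian}, the signature formula \eqref{th} with $\tau=0$ then kills $W_-$, and the resulting non-flat, conformally flat, scalar-flat K\"ahler surface is a twisted product $\Sigma\times_\gamma\CP_1$, $\gamma:\pi_1(\Sigma)\to\mathbf{PSU}(2)$. (The paper reaches the product structure by a slightly different route --- $\tau=0$ forces $b_+=b_-\neq 0$, the Weitzenb\"ock formula makes harmonic $2$-forms parallel, so $g$ is K\"ahler for both orientations, and one then invokes \cite{burbar,lsd} --- but this is the same classical fact you are appealing to.) Where you genuinely diverge is in the final step, ruling out a strictly extremal metric on such an $(M,J)$: you run a case analysis on the monodromy image in $\mathbf{PU}(2)$ and compute or identify the Futaki character in each case, whereas the paper gives a uniform argument with no case analysis at all --- the local product metrics form a $2$-parameter family of constant-scalar-curvature K\"ahler metrics, and since $b_2(M)=2$ these sweep out an \emph{open} subset of the K\"ahler cone on which the Futaki invariant vanishes; real-analyticity of the Futaki invariant in the K\"ahler class then forces it to vanish on the entire cone. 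Your approach buys concreteness (one sees exactly which vector fields could obstruct), while the paper's buys brevity and avoids any appeal to the explicit Calabi--Futaki formula for ruled surfaces, which you invoke without proof in your second sub-case. Two repairs are needed to make your version airtight: first, your trichotomy on the monodromy image (Zariski-dense, contained in a maximal torus, or trivial) is not exhaustive --- the image could be a finite non-abelian or infinite dihedral-type subgroup of $\mathbf{PU}(2)$, neither dense nor abelian; these cases are handled exactly like your first one, since what actually matters is that the centralizer of the image be finite, so that $\mathfrak{aut}(M)$ has no fiberwise part and any extremal metric is automatically of constant scalar curvature. Second, in the torus case you should either carry out the Futaki computation for $\mathbb{P}(\mathcal{O}\oplus L^{-2})$ with $\deg L=0$ or replace it by the paper's analyticity argument, which applies verbatim there since the local product metrics again realize an open set of K\"ahler classes. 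With those adjustments your proof is correct.
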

\begin{proof}
By Proposition \ref{beth}, it suffices to show that there cannot simultaneously be a solution $g$ of type {II(b)} and a solution $\tilde{g}$ of type {III(b)}. 
However, any solution of class {II} is scalar-flat K\"ahler, and therefore has $W_+=0$ by equation \eqref{sebastian}; and such a solution is
of type {II(b)} iff it is not Ricci-flat.  If we now assume 
that $\tau (M) =0$, equation \eqref{th} then tells us that $W_-=0$, making  $(M,g)$ is conformally flat, as well as scalar-flat. The Weitzenb\"ock formula
for $2$-forms thus simplifies to say that the Hodge Laplacian on $2$-forms coincides with the Bochner Laplacian $\nabla^*\nabla$, and 
 any harmonic $2$-form must therefore be parallel. Since the assumption that $\tau =0$ also  forces $b_-=b_+\neq 0$, 
 our metric    $g$ is K\"ahler with respect to both orientations, and, since $g$ is not flat, it follows  \cite{burbar,lsd} that  $(M,J)$ is therefore a geometrically 
ruled surface of the form $\Sigma\times_\gamma \CP_1$
 for some representation $\gamma: \pi_1(\Sigma)\to \mathbf{PSU}(2)$. The given scalar-flat metric $g$ is then a twisted product metric, 
 obtained by equipping $\Sigma$ and $\CP_1$ with metrics of constant (and opposite) Gauss curvature, and thus   belongs to a $2$-parameter family 
 of constant-scalar-curvature metrics gotten by rescaling $\Sigma$ and $\CP_1$ by arbitrary positive constants. 
 Since $b_2 (M)=2$,  this shows that the Futaki invariant is identically zero on an 
 open set of the K\"ahler cone. However, 
the Futaki invariant is  quite generally a real-analytic function of the K\"ahler class, as can be seen   by locally sweeping out
 the K\"aher cone by real-analytic families of real-analytic K\"ahler metrics. Hence the Futaki invariant of $(M,J)$ vanishes for every K\"ahler class, 
 and it  therefore follows \cite{calabix2} that  any 
 extremal K\"ahler metric on $(M,J)$ must  have constant scalar curvature. Consequently,  $(M,J)$ cannot admit a strictly extremal K\"ahler metric, 
 and no $J$-compatible  solution $\tilde{g}$ of type {III(b)} can  therefore exist on $(M,J)$. 
\end{proof}

There is a more fundamental reason to hope that  Conjecture \ref{wild} might be true. Recall that Bach-flat metrics are critical points of
the Weyl functional, and that Bach-flat K\"ahler metrics are therefore, in particular, critical points of the Calabi energy \eqref{calfun} on the space
of K\"ahler metrics compatible with a given complex structure. However, the known examples are always \cite{simtofri} actually {\em absolute minima} for
the latter problem. 

\begin{conj} 
\label{thing} 
Let $g$ be a Bach-flat K\"ahler metric on a compact complex surface $(M,J)$. Then $g$ is an absolute minimizer of 
the Calabi energy $\mathcal{C}$ on the space of all $J$-compatible K\"ahler metrics on $M$. 
\end{conj}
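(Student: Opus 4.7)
The plan is a case-by-case analysis via Theorem \ref{summary}, using the fact that any Bach-flat K\"ahler metric $g$ is extremal by \eqref{johann}, so it already minimizes $\mathcal{C}$ within its own K\"ahler class $\Omega_0$ (a well-known property of extremal K\"ahler metrics, due to Calabi and X.-X. Chen). Thus only the comparison with the infimum of $\mathcal{C}$ on \emph{other} K\"ahler classes remains. Classes II(a) and II(b) are trivial: $\mathcal{C}(g) = 0$ by \eqref{sebastian}, while $\mathcal{C} \geq 0$ always.

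For the remaining K\"ahler-Einstein types I(a) and III(a), I would apply the Cauchy-Schwarz inequality to $s_{\tilde g}$ and $1$, which for any $J$-compatible K\"ahler metric $\tilde g$ in class $\Omega$ yields
$$\mathcal{C}(\tilde g) \geq \frac{\left( \int s_{\tilde g}\, d\mu_{\tilde g} \right)^2}{\Vol(M, \tilde g)} = \frac{32\pi^2 (c_1 \cdot \Omega)^2}{\Omega \cdot \Omega}.$$
In both cases, $\pm c_1$ lies in the K\"ahler cone, so $\pm c_1$ and $\Omega$ both sit in a single connected component of the positive cone in $H^{1,1}(M,\RR)$. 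The reverse Cauchy-Schwarz supplied by the Hodge index theorem then gives $(c_1 \cdot \Omega)^2 \geq c_1^2 \cdot \Omega^2$, whence $\mathcal{C}(\tilde g) \geq 32\pi^2 c_1^2$. A direct computation for the K\"ahler-Einstein metric $g$ shows that this lower bound is attained, completing these cases.

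For type I(b), which occurs only on $\CP_2$ blown up at one or two points, I would invoke the explicit calculation \cite{simtofri} of the Calabi-energy infimum as an algebraic function on the K\"ahler cone, made possible by the existence of an extremal representative in every K\"ahler class of such a surface, and verify directly that the conformally-Einstein Bach-flat metric realizes its global minimum. This is a finite-dimensional computation, and the reductivity question treated in the earlier parts of \S \ref{conclusion} strongly suggests the answer is as expected.

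The main obstacle will be class III(b). There the underlying (possibly blown-up) ruled surface typically has a high-dimensional K\"ahler cone, existence of extremal K\"ahler representatives in arbitrary K\"ahler classes is not established, and no obvious analogue of the Hodge-index-style lower bound above is available, since $c_1^2$ need not be positive and the relevant classes need not be proportional to $\pm c_1$. A proof in this case would seem to require either a new global convexity-type property of the Calabi-energy infimum as a function on the K\"ahler cone, or a direct variational argument exploiting the special structure of $\mathcal{Z}$ and the Poincar\'e-Einstein pair $(M_\pm , h)$ supplied by Proposition \ref{understand}. This is the piece for which I have no good proposal; a satisfactory resolution would presumably also settle Conjecture \ref{wild}.
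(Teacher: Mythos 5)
You have correctly diagnosed the situation: the statement is labelled as a \emph{conjecture} in the paper, and the paper offers no proof of it. The surrounding discussion in \S \ref{conclusion} says exactly what you say: the claim is ``an easy exercise'' for types I(a), II(a), II(b), and III(a); it is true for type I(b) but by a much more subtle argument given in \cite{lebuniq}; and it is \emph{not known} for general solutions of type III(b), being verified in \cite{simtofri} only for the explicitly constructed examples. Your easy cases are handled correctly: the II(a)/II(b) cases are trivial since $\mathcal{C}(g)=0$, and for the K\"ahler--Einstein types I(a) and III(a) the combination of Cauchy--Schwarz with the reverse Cauchy--Schwarz inequality from the Hodge index theorem (applicable because $\pm c_1$ lies in the K\"ahler cone) does yield the sharp lower bound $32\pi^2 c_1^2$, attained by the K\"ahler--Einstein metric. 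Deferring I(b) to the explicit computations available for the two del Pezzo surfaces in question is also consistent with how the literature actually disposes of that case.

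The genuine gap is the one you name yourself: type III(b). No argument you propose, and none in the paper, closes it. The obstacles you list are real --- the K\"ahler cone of a blown-up ruled surface is large, extremal representatives need not exist in every class, $c_1^2$ need not be positive, and the relevant K\"ahler classes are not proportional to $\pm c_1$, so the Hodge-index trick gives nothing useful. Since the statement is presented as a conjecture precisely because this case is open, your proposal should not be read as a proof of the statement but as an accurate account of its status; as written it establishes the conjecture only for solutions of types I and II and III(a), plus the known III(b) examples, which is precisely the state of the art recorded in the paper. Your closing observation that a resolution of type III(b) would also settle Conjecture \ref{wild} matches the paper's remark that Conjecture \ref{thing} implies Conjecture \ref{wild}, since type III(b) solutions have $\mathcal{C}>0$ while type II(b) solutions have $\mathcal{C}=0$.
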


This conjecture is an easy exercise for solutions of type {I(a)}, {II(a)}, {II(b)}, and {II(a)}. It is moreover also true for solutions of type {I(b)}, 
although the proof \cite{lebuniq} is much more subtle   in this case. By contrast,
we do not currently know whether  Conjecture \ref{thing} holds 
 for {\em general} solutions of this type  {III(b)}, although 
it does in fact hold \cite{simtofri} 
 for all {\em known} solutions.  Notice that Conjecture \ref{thing} would certainly 
imply Conjecture \ref{wild}, since any solution of type {III(b)} necessarily  has $\mathcal{C}> 0$, whereas any solution of type {II(b)} obviously  has
$\mathcal{C}=0$.

\bigskip

These remarks make it obvious that solutions of type {III(b)} represent the area where our understanding of the subject 
remains most deficient. Still, it is not hard to prove a bit more about them. For example:

\begin{prop} Let $(M,g, J)$ be a solution of type {\rm III(b)}.  Then the complete Einstein Hermitan manifold $(M_-, h,J)$ of Proposition \ref{understand} 
has numerically positive canonical line bundle $K_{M_-}$. In other words, every compact holomorphic curve $C\subset M_-\subset M$ satisfies $c_1\cdot C < 0$. 
\end{prop}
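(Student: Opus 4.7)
The plan is to represent $2\pi c_1(M)$ on $M_-$ by a closed $(1,1)$-form whose integral over a compact holomorphic curve $C \subset M_-$ one can compute explicitly and then show is strictly negative. The natural candidate is the form $\tilde\rho = \rho_g + 2i\partial\bar\partial\log|s|$ introduced earlier in the paper; since it is cohomologous to $\rho_g$, it represents $2\pi c_1(M)$, and since $\log|s|$ is smooth on any compact $C\subset M_-$, Stokes' theorem gives $\int_C \tilde\rho = \int_C \rho_g = 2\pi\, c_1(M)\cdot C$. The paper already identifies $\tilde\rho$ with the tensor $q(J\cdot,\cdot)$ of \eqref{rep}, so the first step reduces to evaluating $q(v,v)$ on a unit tangent $v\in T_pC$ and integrating over $C$.

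My second step would carry out this evaluation. Observing that $|\nabla s|^2\,g^\perp(v,v)=|\xi^\perp|^2$, where $\xi^\perp$ is the component of the Killing field $\xi=J\nabla s$ normal to $C$, substituting the constant $\kappa = s^3-6s\,\Delta s-12|\nabla s|^2$ in the second term of $q$, and simplifying, one obtains the pointwise formula $q(v,v)= \tfrac{1}{6}s + \tfrac{\kappa}{12}s^{-2} + s^{-2}|\xi^\perp|^2 + (\text{terms proportional to }\Delta s\text{ times }g^\parallel(v,v))$. Because $g$ is extremal, $\Hess s$ is $J$-invariant, so that for any holomorphic tangent $v$ in a $g$-minimal (hence holomorphic) curve $C$ one has $\Hess s(v,v)=-\tfrac12\Delta_{g|_C}(s|_C)$; integration by parts on the closed Riemann surface $C$ converts the $\Delta s$ contribution into $-\int_C|\nabla_{g|_C}s|^2/s^2\,dA_g$, and these pieces combine cleanly to give the identity
\[
2\pi\, c_1(M)\cdot C \;=\; \frac{1}{6}\int_C s\,dA_g \;+\; \frac{\kappa}{12}\int_C s^{-2}\,dA_g \;+\; \int_C \frac{|\xi^\perp|^2}{s^2}\,dA_g.
\]

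On $M_-$ one has $s<0$ and $\kappa<0$, so the first two integrals are strictly negative, while the third is non-negative and so works against us. The main obstacle is therefore to show that the negative contributions dominate. I would attack this by regrouping the last two integrals via the pointwise identity $\kappa + 12|\xi^\perp|^2 = \kappa\,g^\parallel(v,v) + s(s^2-6\,\Delta s)\,g^\perp(v,v)$ (immediate from the formula for $\kappa$ together with $|\xi^\perp|^2 = |\nabla s|^2 g^\perp(v,v)$), which already makes the $\kappa\,g^\parallel$-part manifestly non-positive on $M_-$, and then handling the remaining $\Delta s$-term using the Bochner identity $\tfrac12\Delta|\xi|^2=r_g(\xi,\xi)-|\Hess s|^2$ — which, combined with the Bach-flat formula $r_g = \tfrac{s}{4}g-\tfrac{2}{s}\Hess_0 s$, lets one reexpress $\int_C(\Delta s/s)\cdot g^\perp(v,v)\,dA_g$ in terms of quantities of definite sign. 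An alternative route I would try in parallel is to compare $\rho_g$ with the Riemannian Ricci form $\rho_h^R=\tfrac{\kappa}{4}\omega_h=\tfrac{\kappa}{4s^2}\omega_g$ of the Einstein metric $h$; one has $\int_C\rho_h^R=\tfrac{\kappa}{4}\mathrm{Area}_h(C)<0$ automatically, so it suffices to prove $\int_C(\rho_g-\rho_h^R)\le 0$, and the conformal-Kähler structure of $h$ together with the Lee form $\theta=-d\log|s|$ gives explicit control of this difference. I expect the genuine difficulty of the proof to lie in this last estimate, which mirrors (in the negative-curvature setting) the positivity of $q$ that characterises the class-I case.
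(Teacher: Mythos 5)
There is a genuine gap. Your setup is exactly the paper's: you represent $2\pi c_1$ on $M_-$ by $\tilde\rho=\rho+2i\partial\bar\partial\log|s|$ and correctly reduce $2\pi\,c_1\cdot C$ to $\int_C q(v,v)\,dA$ with $q$ as in \eqref{rep}, arriving at the two manifestly negative terms plus the non-negative term $\int_C|\xi^\perp|^2/s^2\,dA$. But you then stop at precisely the point where the real work begins: you acknowledge that this last term ``works against us'' and offer only speculative strategies (regrouping via the formula for $\kappa$, a Bochner identity, or comparison with the Ricci form of $h$) for showing that the negative contributions dominate. None of these is carried out, and none obviously can be: the form $q$ is \emph{not} pointwise negative on $T_pC$ for a general curve $C\subset M_-$ --- it is negative only in directions tangent to the span of $\nabla s$ and $\xi$ and at critical points of $s$ --- so any direct estimate over the original curve $C$ must somehow exploit global information, and your proposal does not identify what that information is.

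The paper's proof supplies exactly the missing idea, and it is not analytic but algebro-geometric/dynamical: one flows $C$ downward by the holomorphic $\CC^\times$-action generated by $\nabla^{1,0}s$. Because this vector field has zeros, it acts trivially on the Albanese and hence on the Picard torus, so every image of $C$ under the flow, and also the limiting (possibly singular) invariant curve, is \emph{linearly equivalent} to $C$. That limit curve is a non-negative integer combination of $\Sigma_-$ and of ``vertical'' rational curves tangent to the span of $\nabla s$ and $\xi$ --- and on precisely these curves the troublesome term $s^{-2}|\nabla s|^2 g^\perp$ vanishes identically, leaving $q=(2s+\kappa s^{-2})g/12<0$ on $M_-$. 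Evaluating $c_1$ on the limit curve then gives $c_1\cdot C<0$. Without this replacement of $C$ by a degenerate linearly equivalent representative (or some substitute for it), your argument does not close.
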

\begin{proof} The flow of $-\nabla s= J \xi$ for positive time is holomorphic, and preserves the region $M_-\subset M$ where $s< 0$; moreover,  
 $s$ is  non-increasing under the flow.   However, since 
the holomorphic vector field $\nabla s - i \xi$ has zeroes, its contraction with any holomorphic $1$-form vanishes identically, and
the induced action on the Albanese torus is therefore zero. By duality, the induced action on the Picard torus is also trivial, 
so the action sends any holomorphic curve to a curve to which it is linearly equivalent. In particular, any compact holomorphic curve $C\subset M_-\subset M$ 
gives rise to the same divisor line bundle $L\to M$ as any of its images under the downward flow of the gradient vector field of $s$. Taking the 
limit in $\mathbb{P}[\Gamma (M,\mathcal{O}( L))]$, we can thus represent the limit of the the images of $C$ under the downward flow
by a (typically singular) curve in $M_-$ which is sent to itself by the action of $\CC^\times$. Such a curve is a sum, with non-negative  integer coefficients, 
of the curve $\Sigma_-$ at which the minimum of $s$ is achieved (assuming  the minimum does not occur at an isolated  point) and 
of  ``vertical'' rational curves arising from flow lines descending from  saddle points of $s$ in $M_-\subset M$. It therefore suffices to check
that $c_1$ is negative on $\Sigma_-$ and on any curve tangent to $\nabla s$ and $\xi = J \nabla s$. However, we can 
represent $2\pi c_1$ on $M_-$ by $\tilde{\rho}:=\rho + 2i\partial\bar{\partial} \log |s|$, 
where $\rho$ is the Ricci form of $(M,g,J)$, and the corresponding symmetric tensor field is once again given by \eqref{rep}. 
At critical points of $s$, or in directions tangent to the space of $\nabla s$ and $\xi$, this expression  simplifies 
to just become $(2 s+ \kappa s^{-2}) g /12$, which is negative-definite on the region $M_-$ given by $s < 0$, since 
we also have $\kappa < 0$ for a solution of type {III(b)}. This shows that  the given curve $C$ is homologous to a curve
$\tilde{C}$ on which $c_1\cdot \tilde{C} < 0$, and we therefore have $c_1 \cdot C < 0$, too, as claimed. 
\end{proof}

On the other hand, this says nothing at all about
 $(M_+, J)$, and this is definitely  not a mere  matter of accident. 
For example, when $(M,J)$ is a Hirzebruch surface, 
$M_-$ is a tubular neighborhood of a rational curve on which $c_1$ is negative, while  
$M_+$ is a tubular neighborhood of a curve on which  $c_1$ is positive. Curiously enough, 
$(M_-, h)$ and $(M_+, h)$ are 
 in fact
actually isometric\footnote{These  complete Einstein manifolds   seem to have  been first  discovered  
by B\'erard-Bergery \cite{beber}, who made a systematic study of  cohomogeneity-one Einstein metrics. 
They have subsequently been rediscovered several times by various groups of physicists \cite{chamblin,papo}.} in these examples; however, 
this is not  a  paradox,  because the relevant  isometry is orientation-reversing, and 
so does not intertwine the given complex structures  on $M_\pm$ in any  direct manner.  

\bigskip 

We can also prove some things  about the real hypersurface $\mathcal{Z}\subset M$: 

\begin{prop} Let $(M,g, J)$ be a Bach-flat  K\"ahler surface of type {\rm III(b)}, and let $\mathcal{Z} \subset M$ be the smooth 
 real hypersurface given by $s=0$. Then the compact connected $3$-manifold $\mathcal{Z}$ is Seifert-fibered. Moreover, 
 the restriction of $\xi$ to ${\mathcal{Z}}$ is a non-trivial Killing field of constant length with respect to the induced metric 
  $\check{g}= g|_{\mathcal{Z}}$, and its orbits are therefore geodesics of $\check{g}$. 
 The flow of $\xi$ moreover preserves the  CR structure induced on $\mathcal{Z}$ by $(M,J)$, and, at any   $p\in \mathcal{Z}$, the following are equivalent:
 \begin{itemize}
  \item  the Levi form of the induced CR structure is non-degenerate; 
 \item the Ricci curvature of $\check{g}$ is positive in the direction of $\xi$; 
 \item  the second fundamental form $\gemini$ of $\mathcal{Z}\subset M$ is non-zero; and 
 \item the extrinsic Laplacian $\Delta_g s$ of the scalar curvature is non-zero. 
 \end{itemize}
\end{prop}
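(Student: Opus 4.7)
The plan hinges on three facts already in hand: that $\xi=J\nabla s$ is a Killing field of $g$; that $|\xi|_g^2=|\nabla s|_g^2=-\kappa/12$ along $\mathcal{Z}$ by \eqref{capital}; and that $\Hess_0(s)$ vanishes along $\mathcal{Z}$ by \eqref{johann}, so $\Hess(s)|_{\mathcal{Z}}=-\tfrac{\Delta s}{4}\,g|_{\mathcal{Z}}$.

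I would begin with the structural claims about $\xi|_{\mathcal{Z}}$. Since $ds(\xi)=g(\nabla s,J\nabla s)=0$ by $J$-antisymmetry, $\xi$ is tangent to $\mathcal{Z}$, and the formula above shows it is nonvanishing there with constant length $\sqrt{-\kappa/12}$. Because its flow consists of isometries of $g$ preserving $\mathcal{Z}$, it restricts to a Killing field of $\check{g}$; the classical identity $X|\xi|_g^2=-2g(\nabla_\xi\xi,X)$ valid for any Killing field then forces $\nabla^M_\xi\xi=0$, so the orbits are $g$-geodesics, and the Gauss formula makes them $\check{g}$-geodesics since they lie in $\mathcal{Z}$. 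For Seifert-fiberedness I invoke the dichotomy from the proof of Lemma~\ref{lulu}: the closure of the $\xi$-flow in $\Iso_0(M,g)$ is either a circle or a $2$-torus, and in either case one can select a one-parameter subgroup close enough to $\xi$ in that closure that its generator remains nonvanishing on the compact hypersurface $\mathcal{Z}$ (this being the step that needs more than bookkeeping, and where the constant length of $\xi$ together with compactness of $\mathcal{Z}$ is essential); the resulting locally free $S^1$-action then exhibits $\mathcal{Z}$ as a Seifert-fibered $3$-manifold. The CR-invariance of the $\xi$-flow is immediate: since $\xi$ preserves both $J$ (as it is holomorphic) and $\mathcal{Z}$, it preserves $T\mathcal{Z}$ and hence also the distribution $D=T\mathcal{Z}\cap J(T\mathcal{Z})$ together with $J|_D$.

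The substance of the proposition is the four-way equivalence at $p\in\mathcal{Z}$, which I would establish by showing each condition is equivalent to $\Delta_g s(p)\neq 0$. For $\gemini$ this is already explicit in \eqref{omphaloi}. For the Levi form, take the $1$-form $\alpha=\xi^\flat|_{\mathcal{Z}}$, whose kernel in $T\mathcal{Z}$ is exactly $D$ since $\nabla s$ spans the $g$-normal to $\mathcal{Z}$ and $J\nabla s=\xi$; computing $d\alpha(X,Y)=2g(\nabla_X\xi,Y)$ by Killing antisymmetry, using $\nabla_X\xi=J\,\Hess(s)(X,\cdot)^\sharp$, and substituting $\Hess(s)|_{\mathcal{Z}}=-\tfrac{\Delta s}{4}g$ shows that $d\alpha|_D$ is a nonzero constant multiple of $\Delta s\cdot\omega|_D$, and so is non-degenerate exactly when $\Delta s(p)\neq 0$. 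For the Ricci assertion, Bochner's formula \eqref{bow} applied to the constant-length Killing field $\xi$ on $(\mathcal{Z},\check{g})$ reduces to $\Ric_{\check{g}}(\xi,\xi)=|\nabla^{\check{g}}\xi|^2_{\check{g}}$; the pointwise formula $\nabla^M_X\xi=-\tfrac{\Delta s}{4}JX$ on $\mathcal{Z}$ together with the Gauss formula, evaluated in an orthonormal frame for $T\mathcal{Z}$ adapted to the splitting $D\oplus\langle\xi/|\xi|\rangle$, yields $|\nabla^{\check{g}}\xi|^2_{\check{g}}=(\Delta s)^2/8$, completing all the equivalences.
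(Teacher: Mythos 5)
Your argument is essentially the paper's own: constant length of $\xi$ along $\mathcal{Z}$ from \eqref{capital}, Seifert-fibering via approximation of $\xi$ by a periodic Killing field inside the torus closure of its flow, CR-invariance because the flow preserves $J$ and $s$, and the four-way equivalence reduced to $\Delta s(p)\neq 0$ using $\Hess(s)|_{\mathcal{Z}}=-\tfrac{\Delta s}{4}\,g$ and the Bochner formula \eqref{bow}. Your treatment of the Levi form via $d\alpha$ for $\alpha=\xi^\flat|_{\mathcal{Z}}$ is an equivalent repackaging of the paper's use of $i\partial\bar\partial s$ restricted to the CR tangent space, and your explicit evaluation $|\nabla^{\check g}\xi|^2=(\Delta s)^2/8$ is a slightly more computational version of the paper's identification of $\gemini(J\cdot,\cdot)|_{\xi^\perp}$ with a multiple of $\nabla\xi$. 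One intermediate claim is wrong, though it does not damage the conclusion: the identity $X|\xi|_g^2=-2g(\nabla_\xi\xi,X)$ only applies for $X$ along which $|\xi|_g^2$ is known to be constant, i.e.\ for $X$ tangent to $\mathcal{Z}$, since $|\xi|_g^2=|\nabla s|^2$ is certainly not constant on $M$. It therefore yields only that $\nabla^M_\xi\xi$ is normal to $\mathcal{Z}$ --- indeed $\nabla^M_\xi\xi=\tfrac{\Delta s}{4}\nabla s$ along $\mathcal{Z}$, which is generally non-zero --- so the orbits are \emph{not} $g$-geodesics. The Gauss formula then still gives $\nabla^{\check g}_\xi\xi=(\nabla^M_\xi\xi)^\top=0$, which is exactly the statement being proved (geodesics of $\check g$), and is how the paper phrases the computation, working intrinsically on $(\mathcal{Z},\check g)$ from the start.
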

\begin{proof}
Equation \eqref{capital} tells us that $|\xi|^2 = |\nabla s|^2 = - \kappa /12 >  0$ along $\mathcal{Z}$, so the restriction of the Killing field $\xi$ to $\mathcal{Z}$
does indeed have constant, non-zero  length. Since  the closure of the group of isometries generated by $\xi$ is a compact connected Abelian 
Lie group, and hence a torus, we can approximate $\xi$ uniformly by non-zero periodic Killing fields, and the choice of such an approximation
 then endows  $\mathcal{Z}$ with 
a circle action for which all isotropy groups are finite, thereby giving it a Seifert-fibered structure. Since 
$\xi$ is a Killing field of constant length with respect to 
$\check{g}$, we also have 
$$\xi^a \nabla_a \xi_b = - \xi^a \nabla_b \xi_a = - \frac{1}{2}  \nabla_b |\xi|^2 =0,$$
on $(\mathcal{Z},\check{g})$, and the trajectories of $\xi$ are therefore geodesic. Finally, since the flow of $\xi$ on $M$ preserves both $J$ and $s$, 
the  flow acts on the hypersurface $s=0$  by CR automorphisms.

Since $s$ is a non-degenerate defining function for $\mathcal{Z}$, the restriction of $i\partial \bar{\partial} s$ to the CR tangent space of $\mathcal{Z}$
exactly represents the Levi form. On the other hand, equation \eqref{rico} tells us that $i\partial \bar{\partial} s$ is a multiple of the K\"ahler form 
$\omega$ along the locus $s=0$, so we therefore conclude that the Levi-form is non-degenerate exactly at there points of $\mathcal{Z}$ where $\Delta s \neq 0$. 
On the other hand,  equation \eqref{omphaloi} tells us  that the  second fundamental form of  $\mathcal{Z}$ is also non-zero exactly at points where 
$\Delta s \neq 0$. Finally, since the unit normal vector field is a constant multiple of $J\xi$, 
the restriction of 
$\gemini ( J\cdot  , \cdot )$ to  $\xi^\perp \subset T\mathcal{Z}$ is a non-zero constant times the intrinsic covariant derivative $\nabla \xi$, and since 
$\mathcal{Z}$ is umbilic, it therefore follows that $\gemini \neq 0$ exactly when $|\nabla \xi|^2$;  but 
the Bochner Weitzenb\"ock formula \eqref{bow} for a Killing field tells us that $|\nabla \xi|^2 \equiv r(\xi , \xi)$ on $(\mathcal{Z},\check{g})$, so the positivity 
of the Ricci-curvature of $\check{g}$ in the direction of $\xi$ is also equivalent to all the other conditions under discussion. 
\end{proof}

In the known examples, $\mathcal{Z}$ is actually  strictly pseudo-convex. Is this a general feature of all solutions, or
is it a mere artifact, reflecting fact that the known solutions have universal covers of cohomogeneity one?

\bigskip

As long as we  only  consider Bach-flat K\"ahler metrics that are compatible with some  fixed complex structure $J$ on $M$, 
Conjecture \ref{wild} claims that 
 the solution type, as per Theorem \ref{summary},  should be completely determined by $(M,J)$. 
 While  there is a preponderance of evidence in favor of such a conjecture, it is also important to notice 
that the type of the solution is certainly not just determined by the diffeotype of $M$ alone. 
For example, while the smooth manifolds $S^2 \times S^2$ and $\CP_2\# \overline{\CP}_2$ each  support
a unique complex structure with $c_1 > 0$,  each also carries an infinite number of other complex structures  realized by the various Hirzebruch surfaces
$\mathbb{P}( \mathcal{O} \oplus \mathcal{O} (\ell))\to \CP_1$, $\ell \geq 0$. Now,  every Hirzebruch surface with $\ell > 2$  carries \cite{hwasim}
a Bach-flat
K\"ahler metric of type {III(b)}, in contrast to the solutions of type {I(b)} that instead exist when $\ell = 0$ and $1$. Similarly, 
the $4$-manifolds arising as $S^2$-bundles over curves of genus $\geq 2$   carry solutions of both of type {II(b)} and {III(b)}; but 
this form of peaceful co-existence  is once again only made made possible by allowing the complex structure to vary.  

Nonetheless, Theorem \ref{summary} does have  consequences that do primarily  reflect the   differential topology of the underlying $4$-manifold:

\begin{prop} 
\label{ez} 
Let $M$ be the underlying smooth $4$-manifold of a  {non-minimal} compact  complex surface of Kodaira dimension $\geq 0$. 
Then there is  no complex structure $J$ on $M$ for which $(M,J)$ admits a  Bach-flat K\"ahler   metric. Moreover, for
complex surfaces of Kodaira dimension $1$, the existence of Bach-flat K\"ahler metrics is similarly  obstructed even when the surface is minimal. 
\end{prop}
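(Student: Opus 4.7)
The plan is to combine the classification in Theorem \ref{summary} with the Seiberg-Witten-based diffeomorphism invariance of Kodaira dimension and of the minimal model for K\"ahler surfaces. First I would distil from the six sub-cases of Theorem \ref{summary} the observation that every compact Bach-flat K\"ahler complex surface $(N, J_N)$ lies in exactly one of three mutually exclusive families: $(N, J_N)$ is rational or ruled (cases I(a), I(b), II(b), III(b), all with Kodaira dimension $-\infty$); $(N, J_N)$ is a minimal Calabi-Yau surface (case II(a), Kodaira dimension $0$); or $(N, J_N)$ is a minimal surface of general type with no $(-2)$-curves (case III(a), Kodaira dimension $2$). Two features of this trichotomy will do all the subsequent work: Bach-flat K\"ahler surfaces never have Kodaira dimension $1$, and any Bach-flat K\"ahler surface of non-negative Kodaira dimension is automatically minimal.

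Next I would invoke the Seiberg-Witten invariance results of Witten, Friedman-Morgan, and Brussee: the Kodaira dimension of a compact K\"ahler surface depends only on the oriented diffeomorphism type of the underlying smooth $4$-manifold, and, when this Kodaira dimension is non-negative, so too does the diffeomorphism type of the minimal model. With this in hand the proposition follows quickly. If $M$ is the underlying smooth $4$-manifold of a non-minimal complex surface of Kodaira dimension $\geq 0$, then any complex structure $J$ on $M$ admitting a Bach-flat K\"ahler metric would a fortiori have non-negative Kodaira dimension by invariance, forcing $(M,J)$ to belong to family (ii) or (iii) above and hence to be minimal; but this contradicts the invariance of non-minimality for K\"ahler surfaces of non-negative Kodaira dimension. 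The Kodaira-dimension-$1$ half of the statement is even cleaner: invariance of Kodaira dimension forces every K\"ahler complex structure on $M$ to have Kodaira dimension exactly $1$, a value that the trichotomy excludes altogether.

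The main obstacle I anticipate is the clean invocation of the Seiberg-Witten invariance. For $b_+(M) \geq 2$ this is classical, resting on Witten's computation of the Seiberg-Witten invariants of K\"ahler surfaces together with the reconstruction of the canonical class of the minimal model from the set of basic classes; for the residual range $b_+(M) = 1$---which includes Enriques, bielliptic, and certain elliptic surfaces of Kodaira dimension $1$---the invariance still holds, but via a more delicate wall-crossing analysis. My plan would be simply to cite these standard results rather than reprove them, so that the entire proof reduces to the short combinatorial argument above.
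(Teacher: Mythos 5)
Your proposal is correct and follows essentially the same route as the paper: extract from Theorem \ref{summary} that any Bach-flat K\"ahler surface of non-negative Kodaira dimension must be K\"ahler-Einstein with $\lambda\leq 0$, hence minimal of Kodaira dimension $0$ or $2$, and then invoke the Seiberg-Witten diffeomorphism invariance of Kodaira dimension and of non-minimality for K\"ahler-type surfaces. The only cosmetic difference is that you organize the first step as a global trichotomy of solution types rather than reading it off case-by-case, which changes nothing of substance.
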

\begin{proof} On a compact complex surface  $(M,J)$ of Kodaira dimension $\geq 0$, Theorem \ref{summary} tells us that any Bach-flat K\"ahler metric
must be K\"ahler-Einstein, with Einstein constant $\lambda \leq 0$. This in particular either means that  $c_1^\RR=0$ or  $c_1 < 0$.
Hence  $(M,g)$ must be minimal and have Kodaira dimension $0$ or $2$. However, for complex surfaces of K\"ahler type,  
Seiberg-Witten theory implies  \cite{lky,morgan} that Kodaira dimension is a diffeomorphism invariant,
and  that non-minimality is a moreover a diffeomorphism  invariant whenever the Kodaira dimension is $\geq 0$. 
Thus the operative obstruction really just reflects the differential topology of $M$, in a manner that is  insensitive to the detailed complex geometry 
of the given  $J$. 
\end{proof}

Finally, it should perhaps also be emphasized that 
Proposition \ref{ez}  certainly does not obstruct the existence of  more 
general Bach-flat  metrics. Indeed, a result of  Taubes \cite{tasd} implies that any complex surface has blow-ups that  admit
anti-self-dual metrics. Thus,  there are certainly many non-minimal complex surfaces of each possible Kodaira dimension $\geq 0$
that   do indeed  admit Bach-flat  metrics; it's just  that these metrics don't happen to be conformally  K\"ahler!

\vfill

\noindent 
{\bf Acknowledgments:} This paper is  dedicated to the fond memory of my  friend  Gennadi Henkin,
in the belief that he would have 
been intrigued  by the role  played   by CR manifolds, and would have formulated   perceptive, helpful  questions that could  have
pushed this  investigation even  further. 
I would also like to thank another old friend, Robin Graham, for some useful pointers  regarding  Poincar\'e-Einstein
metrics. Finally, I would like to thank my former student Christina T{\o}nnesen-Friedman for clarifying some important details  of  her beautiful construction of   explicit examples. 
\pagebreak 
%
%\bibliographystyle{siam}
%\bibliography{lebrun}

\begin{thebibliography}{10}

\bibitem{aubin}
{\sc T.~Aubin}, {\em \'{E}quations du type {M}onge-{A}mp\`ere sur les
  vari\'et\'es k\"ah{\-}ler{\-}iennes compactes}, C. R. Acad. Sci. Paris S\'er.
  A-B, 283 (1976), pp.~Aiii, A119--A121.

\bibitem{Bach}
{\sc R.~Bach}, {\em {Zur Weylschen Relativit\"atstheorie und der Weylschen
  Erweiterung des Kr\"ummungstensorbegriffs.}}, Math. Zeitschr., 9 (1921),
  pp.~110--135.

\bibitem{bandomab}
{\sc S.~Bando and T.~Mabuchi}, {\em Uniqueness of {E}instein {K}\"ahler metrics
  modulo connected group actions}, in Algebraic geometry, {S}endai, 1985,
  vol.~10 of Adv. Stud. Pure Math., North-Holland, Amsterdam, 1987, pp.~11--40.

\bibitem{bpv}
{\sc W.~Barth, C.~Peters, and A.~Van~de Ven}, {\em Compact Complex Surfaces},
  vol.~4 of Ergebnisse der Mathematik und ihrer Grenzgebiete (3),
  Springer-Verlag, Berlin, 1984.

\bibitem{bartnik}
{\sc R.~Bartnik}, {\em The mass of an asymptotically flat manifold}, Comm. Pure
  Appl. Math., 39 (1986), pp.~661--693.

\bibitem{beber}
{\sc L.~B\'erard~Bergery}, {\em Scalar curvature and isometry group}, in
  Spectra of Riemannian Manifolds, Tokyo, 1983, Kagai Publications, pp.~9--28.

\bibitem{bes}
{\sc A.~L. Besse}, {\em Einstein Manifolds}, vol.~10 of Ergebnisse der
  Mathematik und ihrer Grenzgebiete (3), Springer-Verlag, Berlin, 1987.

\bibitem{bishop}
{\sc R.~Bishop}, {\em A relationship between volume, mean curvature, and
  diameter}, Notices Am. Math. Soc., 10 (1963), p.~364.

\bibitem{bochner}
{\sc S.~Bochner}, {\em {Vector fields and Ricci curvature}}, Bull. Am. Math.
  Soc., 52 (1946), pp.~776--797.

\bibitem{bottcrit}
{\sc R.~Bott}, {\em Nondegenerate critical manifolds}, Ann. of Math. (2), 60
  (1954), pp.~248--261.

\bibitem{burbar}
{\sc D.~Burns and P.~De~Bartolomeis}, {\em Stability of vector bundles and
  extremal metrics}, Invent. Math., 92 (1988), pp.~403--407.

\bibitem{calabix}
{\sc E.~Calabi}, {\em Extremal {K}\"ahler metrics}, in Seminar on Differential
  Geometry, vol.~102 of Ann. Math. Studies, Princeton Univ. Press, Princeton,
  N.J., 1982, pp.~259--290.

\bibitem{calabix2}
\leavevmode\vrule height 2pt depth -1.6pt width 23pt, {\em Extremal {K}\"ahler
  metrics. {II}}, in Differential {G}eometry and {C}omplex {A}nalysis,
  Springer, Berlin, 1985, pp.~95--114.

\bibitem{chamblin}
{\sc A.~Chamblin, R.~Emparan, C.~V. Johnson, and R.~C. Myers}, {\em Large {$N$}
  phases, gravitational instantons, and the nuts and bolts of {A}d{S}
  holography}, Phys. Rev. D (3), 59 (1999), pp.~064010, 9.

\bibitem{chenlebweb}
{\sc X.~X. Chen, C.~LeBrun, and B.~Weber}, {\em On conformally {K}\"ahler,
  {E}instein manifolds}, J. Amer. Math. Soc., 21 (2008), pp.~1137--1168.

\bibitem{delpezzo}
{\sc M.~Demazure}, {\em Surfaces de del {P}ezzo, {II}, {III}, {IV}, {V}}, in
  S\'eminaire sur les {S}ingularit\'es des {S}urfaces, vol.~777 of Lecture
  Notes in Mathematics, Berlin, 1980, Springer, pp.~21--69.

\bibitem{derd}
{\sc A.~Derdzi{\'n}ski}, {\em Self-dual {K}\"ahler manifolds and {E}instein
  manifolds of dimension four}, Compositio Math., 49 (1983), pp.~405--433.

\bibitem{fefgra}
{\sc C.~Fefferman and C.~R. Graham}, {\em The ambient metric}, vol.~178 of
  Annals of Mathematics Studies, Princeton University Press, Princeton, NJ,
  2012.

\bibitem{giltrud}
{\sc D.~Gilbarg and N.~S. Trudinger}, {\em Elliptic Partial Differential
  Equations of Second Order}, Springer-Verlag, Berlin, second~ed., 1983.

\bibitem{grahlee}
{\sc C.~R. Graham and J.~M. Lee}, {\em Einstein metrics with prescribed
  conformal infinity on the ball}, Adv. Math., 87 (1991), pp.~186--225.

\bibitem{gray-ht}
{\sc A.~Gray}, {\em Invariants of curvature operators of four-dimensional
  {R}iemannian manifolds}, in Proceedings of the {T}hirteenth {B}iennial
  {S}eminar of the {C}anadian {M}athematical {C}ongress ({D}alhousie {U}niv.,
  {H}alifax, {N}.{S}., 1971), {V}ol. 2, Canad. Math. Congr., Montreal, Que.,
  1972, pp.~42--65.

\bibitem{GH}
{\sc P.~Griffiths and J.~Harris}, {\em Principles of Algebraic Geometry},
  Wiley-Interscience, New York, 1978.

\bibitem{gromhaus}
{\sc M.~Gromov}, {\em Structures m\'etriques pour les vari\'et\'es
  riemanniennes}, CEDIC, Paris, 1981.
\newblock Edited by J. Lafontaine and P. Pansu.

\bibitem{hit}
{\sc N.~J. Hitchin}, {\em Compact four-dimensional {E}instein manifolds}, J.
  Differential Geometry, 9 (1974), pp.~435--441.

\bibitem{hwasim}
{\sc A.~D. Hwang and S.~R. Simanca}, {\em Extremal {K}\"ahler metrics on
  {H}irzebruch surfaces which are locally conformally equivalent to {E}instein
  metrics}, Math. Ann., 309 (1997), pp.~97--106.

\bibitem{klp}
{\sc J.~Kim, C.~LeBrun, and M.~Pontecorvo}, {\em Scalar-flat {K}\"ahler
  surfaces of all genera}, J. Reine Angew. Math., 486 (1997), pp.~69--95.

\bibitem{kobfixed}
{\sc S.~Kobayashi}, {\em Fixed points of isometries}, Nagoya Math. J., 13
  (1958), pp.~63--68.

\bibitem{laf}
{\sc J.~Lafontaine}, {\em Remarques sur les vari\'et\'es conform\'ement
  plates}, Math. Ann., 259 (1982), pp.~313--319.

\bibitem{lebhspace}
{\sc C.~LeBrun}, {\em {${\mathcal H}$}-space with a cosmological constant},
  Proc. Roy. Soc. London Ser. A, 380 (1982), pp.~171--185.

\bibitem{lsd}
\leavevmode\vrule height 2pt depth -1.6pt width 23pt, {\em On the topology of
  self-dual $4$-manifolds}, Proc. Amer. Math. Soc., 98 (1986), pp.~637--640.

\bibitem{leblown}
\leavevmode\vrule height 2pt depth -1.6pt width 23pt, {\em Scalar-flat
  {K}\"ahler metrics on blown-up ruled surfaces}, J. Reine Angew. Math., 420
  (1991), pp.~161--177.

\bibitem{lebhem}
\leavevmode\vrule height 2pt depth -1.6pt width 23pt, {\em Einstein metrics on
  complex surfaces}, in Geometry and {P}hysics ({A}arhus, 1995), vol.~184 of
  Lecture Notes in Pure and Appl. Math., Dekker, New York, 1997, pp.~167--176.

\bibitem{lky}
\leavevmode\vrule height 2pt depth -1.6pt width 23pt, {\em Kodaira dimension
  and the {Y}amabe problem}, Comm. Anal. Geom., 7 (1999), pp.~133--156.

\bibitem{lebuniq}
\leavevmode\vrule height 2pt depth -1.6pt width 23pt, {\em On {E}instein,
  {H}ermitian 4-manifolds}, J. Differential Geom., 90 (2012), pp.~277--302.

\bibitem{lebhem10}
\leavevmode\vrule height 2pt depth -1.6pt width 23pt, {\em Einstein manifolds
  and extremal {K}\"ahler metrics}, J. Reine Angew. Math., 678 (2013),
  pp.~69--94.

\bibitem{ls2}
{\sc C.~LeBrun and S.~R. Simanca}, {\em On the {K}\"ahler classes of extremal
  metrics}, in Geometry and Global Analysis (Sendai, 1993), Tohoku Univ.,
  Sendai, 1993, pp.~255--271.

\bibitem{mazco}
{\sc R.~Mazzeo}, {\em The {H}odge cohomology of a conformally compact metric},
  J. Differential Geom., 28 (1988), pp.~309--339.

\bibitem{morgan}
{\sc J.~Morgan}, {\em The {S}eiberg-{W}itten Equations and Applications to the
  Topology of Smooth Four-Manifolds}, vol.~44 of Mathematical Notes, Princeton
  University Press, 1996.

\bibitem{obata}
{\sc M.~Obata}, {\em The conjectures on conformal transformations of
  {R}iemannian manifolds}, J. Differential Geometry, 6 (1971/72), pp.~247--258.

\bibitem{sunspot}
{\sc Y.~Odaka, C.~Spotti, and S.~Sun}, {\em Compact moduli spaces of {D}el
  {P}ezzo surfaces and {K}\"ahler-{E}instein metrics}.
\newblock e-print arXiv:1210.0858 [math.DG], 2012.

\bibitem{papo}
{\sc D.~N. Page and C.~N. Pope}, {\em Inhomogeneous {E}instein metrics on
  complex line bundles}, Classical Quantum Gravity, 4 (1987), pp.~213--225.

\bibitem{rolsing}
{\sc Y.~Rollin and M.~Singer}, {\em Construction of {K}\"ahler surfaces with
  constant scalar curvature}, J. Eur. Math. Soc. (JEMS), 11 (2009),
  pp.~979--997.

\bibitem{syaction}
{\sc R.~M. Schoen and S.~T. Yau}, {\em Complete manifolds with nonnegative
  scalar curvature and the positive action conjecture in general relativity},
  Proc. Nat. Acad. Sci. U.S.A., 76 (1979), pp.~1024--1025.

\bibitem{simtofri}
{\sc S.~R. Simanca and C.~T{\o}nnesen-Friedman}, {\em The energy of a
  {K}\"ahler class on admissible manifolds}, Math. Ann., 351 (2011),
  pp.~805--834.

\bibitem{tasd}
{\sc C.~H. Taubes}, {\em The existence of anti-self-dual conformal structures},
  J. Differential Geom., 36 (1992), pp.~163--253.

\bibitem{tho}
{\sc J.~A. Thorpe}, {\em Some remarks on the {G}auss-{B}onnet formula}, J.
  Math. Mech., 18 (1969), pp.~779--786.

\bibitem{tian}
{\sc G.~Tian}, {\em On {C}alabi's conjecture for complex surfaces with positive
  first {C}hern class}, Invent. Math., 101 (1990), pp.~101--172.

\bibitem{tofribach}
{\sc C.~W. T{\o}nnesen-Friedman}, {\em Extremal {K}\"ahler metrics and
  {H}amiltonian functions. {II}}, Glasg. Math. J., 44 (2002), pp.~241--253.

\bibitem{wusphere}
{\sc W.~T. Wu}, {\em Sur l'existence d'un champ d'\'el\'ements de contact ou
  d'une structure complexe sur une sph\`ere}, C. R. Acad. Sci. Paris, 226
  (1948), pp.~2117--2119.

\bibitem{yauruled}
{\sc S.~T. Yau}, {\em On the curvature of compact {H}ermitian manifolds},
  Invent. Math., 25 (1974), pp.~213--239.

\bibitem{yau}
\leavevmode\vrule height 2pt depth -1.6pt width 23pt, {\em Calabi's conjecture
  and some new results in algebraic geometry}, Proc. Nat. Acad. Sci. U.S.A., 74
  (1977), pp.~1798--1799.

\bibitem{yauma}
\leavevmode\vrule height 2pt depth -1.6pt width 23pt, {\em On the {R}icci
  curvature of a compact {K}\"ahler manifold and the complex {M}onge-{A}mp\`ere
  equation. {I}}, Comm. Pure Appl. Math., 31 (1978), pp.~339--411.

\end{thebibliography}

\end{document}